\def\gm{\gamma}
\def\ex{\mathrm{ex}}
\def\ecrossG{\sum_{1\leqslant i<j \leqslant r-1}e(V_i,V_j)}
\def\ecrossT{\sum_{1\leqslant i<j \leqslant r-1}|V_i||V_j|}
\def\ecrossTT{\sum_{2\leqslant i<j \leqslant r-1}|V_i||V_j|}
\def\einG{\sum_{i=1}^{r-1}e(V_i)}
\def\aver{\frac{n}{r-1}}
\def\error{2\sqrt{\gm} n}
\def\lowbound{\aver-2\sqrt{\gm} n}
\def\upbound{\aver+2\sqrt{\gm} n}
\def\lf{\left\lfloor}
\def\rf{\right\rfloor}
\def\lc{\left\lceil}
\def\rc{\right\rceil}
\def\dtaT{\lf\frac{r-2}{r-1}n\rf}
\def\bad{32kr^3\gm n}
\def\inactive{8kr^5\gm n}
\def\Fkr{F_{k,r}}
\def\geqs{\geqslant}
\def\leqs{\leqslant}
\def\sgmn{\sqrt{\gm}n}
\def\gmnn{\gm n^2}
\newtheorem{thm}{Theorem}
\newtheorem{lem}[thm]{Lemma}
\newtheorem{conj}[thm]{Conjecture}
\newtheorem{claim}{Claim}
\newtheorem{case}{Case}
\begin{document}

\title{Decomposition of Graphs into $(k,r)$-Fans and Single Edges
\thanks{The work was supported by NNSF of China (No. 11271348).}}
\author{Xinmin Hou$^a$, \quad Yu Qiu$^b$, \quad Boyuan Liu$^c$\\
\small$^{a,b,c}$ Key Laboratory of Wu Wen-Tsun Mathematics\\
\small Chinese Academy of Sciences\\
\small School of Mathematical Sciences\\
\small University of Science and Technology of China\\
\small Hefei, Anhui 230026, China.\\
\small $^a$xmhou@ustc.edu.cn,\quad  $^b$yuqiu@mail.ustc.edu.cn\\
\small $^c$lby1055@mail.ustc.edu.cn}

\date{}

\maketitle

\begin{abstract}
Let $\phi(n,H)$ be the largest integer such that, for all graphs $G$ on $n$ vertices, the edge set $E(G)$ can be partitioned into at most $\phi(n, H)$ parts, of which every part either is a single edge or forms a graph isomorphic to $H$. Pikhurko and Sousa conjectured that $\phi(n,H)=\ex(n,H)$ for $\chi(H)\geqs3$ and all sufficiently large $n$, where $\ex(n,H)$ denotes the maximum number of edges of graphs on $n$ vertices that does not contain $H$ as a subgraph. A $(k,r)$-fan is a graph on $(r-1)k+1$ vertices consisting of $k$ cliques of order $r$ which intersect in exactly one common vertex. In this paper, we verify Pikhurko and Sousa's conjecture for $(k,r)$-fans. The result also generalizes a result of Liu and Sousa.
\end{abstract}

\section{Introduction}

All graphs considered in this paper are simple and finite. Given a graph $G=(V, E)$ and a vertex $x\in V(G)$, the number of neighbors of $x$ in $G$, denoted by $\deg_G(x)$, is called the {\em degree} of $x$ in $G$. The number of edges of $G$ is denoted by $e(G)$. A {\em{matching}} in $G$ is a subgraph of $G$ such that each of its vertices has degree 1. The {\em matching number} of $G$ is the maximum number of edges in a matching of $G$, denoted by $\nu(G)$. As usual, we use $\delta(G),\Delta(G)$ and $\chi(G)$ to denote the minimum degree, maximum degree, and chromatic number of $G$, respectively. For a graph $G$ and $S, T\subset V(G)$, let $e_G(S, T)$ be the number of edges $e=xy\in{E(G)}$ such that $x\in S$ and $y\in T$, if $S=T$, we use $e_G(S)$ instead of $e_G(S, S)$; and  we use $e_G(u, T)$ instead of $e_G(\{u\}, T)$ for convenience, the index $G$ will be omitted if no confusion from the context. For a subset $X\subseteq V(G)$ or $X\subseteq  E(G)$, let $G[S]$ be the subgraph of $G$ induced by $X$, that is $G[X]=(X, E(X))$ if $X\subseteq V(G)$, or $G[X]=(V(X),X)$ if $X\subseteq  E(G)$. For two integers $a, b$ with $a\le b$, let $[a,b]=\{a, a+1,\cdots, b\}$.

Let $K_r$ denote the complete graph of order $r$ and let $T_{n,r}$ denote the complete balanced $r$-partite graph of order $n$, also called the {\em Tur\'an graph} in literature. For $k\geqs2$ and $r\geqs3$, a {\em $(k,r)$-fan}, denoted by $F_{k,r}$, is the graph on $(r-1)k+1$ vertices consisting of $k$ $K_r$'s which intersect in exactly one common vertex, called the center of it.
For some fixed graph $H$, let $\ex(n,H)$ be the maximum number of edges of graphs on $n$ vertices that does not contain $H$ as a subgraph. A graph $G$ is called an {\em extremal graph for $H$} if $G$ has $n$ vertices with $e(G)=\ex(n, H)$ and does not contain $H$ as a subgraph.
Given two graphs $G$ and $H$, an {\em $H$-decomposition} of $G$ is a partition of edges of $G$ such that every part is a single edge or forms a graph isomorphic to $H$. Let $\phi(G,H)$ be the smallest number of parts in an $H$-decomposition of $G$. Clearly, if $H$ is non-empty, then
$$\phi(G,H)=e(G)-p_{H}(G)(e(H)-1),$$
where $p_{H}(G)$ is the maximum number of edge-disjoint copies of $H$ in $G$. Define
$$\phi(n,H)=\max\{\phi(G,H):\ G \mbox{ is a graph on $n$ vertices}\}.$$

This function, motivated by the problem of representing graphs by set intersections, was first studied by Erd\"{o}s, Goodman and P\'{o}sa \cite{intersections}, they proved that $\phi(n,K_3)=\ex(n,K_3)$. The result was generalized by Bollob\'{a}s \cite{Bollobas}, he proved that $\phi(n,K_r)=\ex(n,K_r)$, for all $n\geqs{r}\geqs3$.
More generally,  Pikhurko and Sousa~\cite{general} proposed the following conjecture.
\begin{conj}[\cite{general}]\label{CONJ: Pikhurko and Sousa}
For any graph $H$ with $\chi(H)\geqs3$, there is an $n_0=n_0(H)$ such that $\phi(n,H)=\ex(n,H)$ for all $n\geqs{n_0}$.
\end{conj}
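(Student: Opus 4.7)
The plan is to prove the conjecture for $H=\Fkr$, namely $\phi(n,\Fkr)=\ex(n,\Fkr)$ for all sufficiently large $n$. The lower bound $\phi(n,\Fkr)\geqs\ex(n,\Fkr)$ is immediate: any $\Fkr$-free extremal graph $G^{*}$ on $n$ vertices has $\ex(n,\Fkr)$ edges, so its only $\Fkr$-decomposition is into single edges, giving $\phi(G^{*},\Fkr)=\ex(n,\Fkr)$.

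For the upper bound, fix an $n$-vertex graph $G$; if $e(G)\leqs\ex(n,\Fkr)$ split $G$ into single edges, so assume $e(G)>\ex(n,\Fkr)$. Since $\chi(\Fkr)=r$ and $\ex(n,\Fkr)$ is within a constant of $e(T_{n,r-1})$, I would first apply the Erd\H{o}s--Simonovits stability theorem to obtain an $(r-1)$-partition $V(G)=V_{1}\cup\cdots\cup V_{r-1}$ that maximizes $\ecrossG$ and satisfies $\einG\leqs\gmnn$ for a small parameter $\gm$; a standard double-counting then forces $|V_{i}|\in[\lowbound,\upbound]$. Declare a vertex $v\in V_{i}$ \emph{bad} if $\deg_{V_{j}}(v)<|V_{j}|-O(\sgmn)$ for some $j\neq i$, and \emph{inactive} if during an iterative clean-up its reserve of good neighbours in each other part dips below the safe threshold needed to host a $K_{r-2}$-transversal; routine counting bounds the bad set by $\bad$ and the inactive set by $\inactive$.

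The core step is the extraction of edge-disjoint fans. A copy of $\Fkr$ centered at $c\in V_{i}$ must use exactly $k$ internal edges of $V_{i}$ incident to $c$, together with $k$ vertex-disjoint $K_{r-2}$-transversals in the other parts, each completed to a $K_{r}$ by $c$ and one of its $V_{i}$-neighbours. For an active vertex $c$ the number of available transversals is $\Omega(n^{r-2})$, while any previously chosen fan uses only $O(kr)$ of them, so greedy extraction never stalls. I would orient each internal edge $uv$ (both endpoints active) towards the endpoint that still has fewer scheduled fans, then at each active $c$ pack its incoming internal edges into groups of $k$ and attach fresh transversals to produce fans. The edges not absorbed---those touching bad or inactive vertices (at most $O(\gmnn)$ in total), and at most $k-1$ leftover internal edges per active centre (at most $kn$ in total)---become single-edge parts.

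The main obstacle is closing the arithmetic. If $p$ fans are extracted, the number of parts in the resulting decomposition is $e(G)-p(k\binom{r}{2}-1)$. Writing $e(G)=\einG+C_{\mathrm{cross}}$ with $C_{\mathrm{cross}}\leqs e(T_{n,r-1})$, one gets $pk\geqs\einG-O(\gmnn)-O(n)$. Combining with the identity $\ex(n,\Fkr)=e(T_{n,r-1})+c_{k,r}$ for a bounded correction $c_{k,r}$, the required inequality $e(G)-p(k\binom{r}{2}-1)\leqs\ex(n,\Fkr)$ reduces, after routine algebra, to something automatic provided $\binom{r}{2}-1-1/k>0$ (true for $r\geqs3,\, k\geqs2$) and the lower-order error is absorbed by $c_{k,r}$ once $\gm$ is small and $n$ large. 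The subtle point, and where I expect the bulk of the effort to go, is actually pinning down $c_{k,r}$ itself---identifying the exact extremal auxiliary graph that can be planted in a Tur\'an class without creating a $\Fkr$---and verifying that the residual configuration left after greedy extraction cannot exceed this extremal structure; this forces a short local, stability-within-stability argument at the very end to match the error term without slack rather than only to within $o(n^{2})$.
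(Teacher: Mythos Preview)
Your outline captures the first half of the paper's argument (the ``large $m$'' regime) but has two genuine gaps that prevent it from closing.

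\textbf{Gap 1: the leftover from your greedy step is $O(n)$, not $O(1)$.} Your fan-extraction only builds copies of $\Fkr$ centred at a vertex $c\in V_i$ using $k$ internal edges of $V_i$ \emph{incident to $c$}. (Incidentally, the sentence ``a copy of $\Fkr$ centred at $c\in V_i$ must use exactly $k$ internal edges of $V_i$ incident to $c$'' is false---the $k$ internal edges can lie in a different class and need not touch $c$ at all.) After exhausting all vertices of internal degree $\geqs k$ you are left with $\Delta(G[V_i])\leqs k-1$, but the number of remaining internal edges can still be linear in $n$ (think of a large matching inside $V_i$). Your own accounting ``at most $k-1$ leftover \dots\ per active centre, at most $kn$ in total'' concedes this. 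The paper needs, and proves, a much stronger residual bound: it runs a \emph{second} extraction step in which a matching $v_1^1w_1^1,\dots,v_1^kw_1^k$ inside $V_1$ is completed to a fan centred at some $u\in V_2$. Only after this second step do both $\Delta$ and the matching number of each $G[V_i]$ drop below $k$, whence the Chv\'atal--Hanson bound gives at most $(r-1)k(k-1)$ residual internal edges---a constant. Without this step your inequality $pk\geqs m-O(\gmnn)-O(n)$ cannot beat $(m-g(k))/(e(\Fkr)-1)$ once $m$ is, say, of order $n^{1/2}$.

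\textbf{Gap 2: the small-$m$ case is not a short cleanup.} You correctly flag that matching the constant $g(k)$ exactly is ``the subtle point'', but describe it as ``a short local, stability-within-stability argument''. In the paper this is an entirely separate case ($m\leqs m_1$, a fixed constant) with its own machinery: one shows the partition is balanced to within $r-2$ (not $O(\sgmn)$), computes the precise number $s$ of fans needed, and then---rather than packing fans greedily---removes $s$ carefully chosen fans so that some vertex $u$ ends up with degree below $\lfloor\tfrac{r-2}{r-1}n\rfloor-s$, forcing $G'-u$ to exceed $\ex(n-1,\Fkr)$ and hence contain a further $\Fkr$. This in turn splits into three structural subcases (many high-internal-degree vertices; a large $F_{qk,r}$; neither), each requiring the exact value of $g(k)$ and the bound $s<\tfrac{(r-2)(k-1)}{2}+1$. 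None of this is visible from your sketch, and the greedy-plus-arithmetic line you propose gives no leverage here because the error terms swamp $m$.
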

In~\cite{general}, Pikhurko and Sousa also proved that $\phi(n,H)=\ex(n,H)+o(n^2).$ Recently, the error term improved to be $O(n^{2-\alpha})$ for some $\alpha>0$ by Allen, B\"{o}ttcher, and Person~\cite{ImprovedError}. Sousa verified the conjecture for some families of edge-critical graphs, namely, clique-extensions of order $r\geqs4$ $(n\geqs{r})$~\cite{clique-extension} and the cycles of length 5 $(n\geqs6)$~\cite{5-cycles} and 7 $(n\geqs10)$~\cite{7-cycle}.
In~\cite{edge-critical case}, \"{O}zkahya and Person verified the conjecture for all edge-critical graphs with chromatic number $r\geqs3$. Here, a graph $H$ is called {\em edge-critical}, if there is an edge $e\in{E(H)}$, such that $\chi(H)>\chi(H-e)$. For non-edge-critical graphs, Liu and Sousa~\cite{k-fan} verified the conjecture for $(k,3)$-fans, there result is as the following.

\begin{thm}[\cite{k-fan}]\label{THM: Liu-Sousa} For $k\geqs1$, there exists $n_0=n_0(k,r)$ such that $\phi(n,F_{k,3})=\ex(n,F_{k,3})$ for all $n\geqs{n_0}$. Moreover, the only graphs attaining $\ex(n,F_{k,3})$ are the extremal graphs for $F_{k,3}$.
\end{thm}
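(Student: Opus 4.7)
The lower bound $\phi(n,F_{k,3})\geqslant \ex(n,F_{k,3})$ is trivial: any extremal $F_{k,3}$-free graph $G^{\ast}$ admits no $F_{k,3}$-part, so every edge must form its own singleton part, giving $\phi(G^{\ast},F_{k,3}) = \ex(n,F_{k,3})$. For the upper bound, the identity $\phi(G,F_{k,3}) = e(G) - (3k-1)\,p_{F_{k,3}}(G)$ (with $e(F_{k,3}) = 3k$) reduces the task to showing that every $n$-vertex graph $G$ with $e(G) = \ex(n,F_{k,3}) + t$ admits at least $\lceil t/(3k-1)\rceil$ pairwise edge-disjoint copies of $F_{k,3}$.

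The strategy is a greedy peeling steered by a stability theorem. Peel off edge-disjoint copies of $F_{k,3}$ one at a time to produce a nested sequence $G = G_0 \supseteq G_1 \supseteq \cdots \supseteq G_m$ that halts only when $G_m$ is $F_{k,3}$-free; the goal is then to prove $e(G_m) \leqslant \ex(n,F_{k,3})$. If at some intermediate stage $e(G_i) \geqslant \ex(n,F_{k,3}) + \Omega(n^2)$, a supersaturation / random-greedy argument yields $\Omega(n^{(r-1)k+1})$ copies of $F_{k,3}$ in $G_i$, far more than enough to continue the peel. Otherwise $e(G_i) = \ex(n,F_{k,3}) + o(n^2)$ and, since $F_{k,3}$ is $3$-chromatic, the Erd\H{o}s--Simonovits stability theorem forces $G_i$ to be $o(n^2)$-close in edit distance to the Tur\'an graph $T_{n,2}$; combined with the exact formula of Erd\H{o}s--F\"uredi--Gould--Gunderson, which says $\ex(n,F_{k,3}) = \lfloor n^2/4\rfloor + c_k$ for a constant $c_k$ depending on the parity of $k$, this places $G_i$ in a rigid structural form.

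Concretely, I would fix an approximate bipartition $V(G_i) = A \sqcup B$ minimizing the edit distance to $T_{n,2}$ and let $W$ be the set of bad vertices --- those with abnormally low across-degree or abnormally high in-part degree --- showing $|W| = o(n)$ by a standard cleaning. Between $A\setminus W$ and $B\setminus W$ the bipartite backbone is nearly complete, so every good vertex shares $\Omega(n)$ common neighbors on the opposite side with any other good vertex on its own side. Each ``excess'' edge $uv$ (one present beyond what the extremal configuration allows) can then be used as the hub of a fresh $F_{k,3}$ centered at $u$: the edge $uv$ together with a common cross-neighbor of $u$ and $v$ forms one triangle through $u$, and the remaining $k-1$ triangles through $u$ are harvested greedily from the dense bipartite backbone. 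To build all $\Omega(t/(3k-1))$ required fans simultaneously without repeating edges, I would treat the selection as a greedy matching on triangle-candidates and use the $\Omega(n)$ surplus at each good vertex as slack, provided no single center is asked to serve too many fans.

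The main obstacle is precisely this global edge-disjoint absorption: the arithmetic only closes if each excess edge is absorbed at a cost of at most $3k-1$ unused edges, so any constant-factor loss spoils tightness. Handling it requires careful book-keeping of residual degree at every potential center and, when $t$ is large, an averaging argument that spreads the fans evenly over candidate centers. The uniqueness statement is then automatic: equality $\phi(G,F_{k,3}) = \ex(n,F_{k,3})$ forces both $e(G) = \ex(n,F_{k,3})$ and $p_{F_{k,3}}(G) = 0$, so $G$ must be $F_{k,3}$-free with the maximum possible number of edges, i.e., an extremal graph for $F_{k,3}$.
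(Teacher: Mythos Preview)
Your framework is right---reduce $\phi(G,F_{k,3})\leqslant\ex(n,F_{k,3})$ to a packing lower bound and use stability to control the structure---but the central construction you describe cannot work as written. You propose to build a fan centred at $u$ by taking one inside-part edge $uv$ (with $u,v$ in the same side $A$), closing it to a triangle via a common neighbour $w\in B$, and then ``harvesting the remaining $k-1$ triangles through $u$ greedily from the dense bipartite backbone.'' For $r=3$ this last step is impossible: the backbone is bipartite, so it contains no triangles at all. Every triangle through $u\in A$ must use a second vertex of $A$ (contributing another inside-$A$ edge) or two vertices of $B$ (contributing an inside-$B$ edge). Consequently every copy of $F_{k,3}$ consumes at least $k$ inside-part edges, never just one.

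This observation is in fact the engine of the paper's proof, and it is what resolves the ``arithmetic only closes at cost $3k-1$'' obstacle you flagged. With $m=e(V_1)+e(V_2)$ and $e(G)-\ex(n,F_{k,3})\leqslant m-g(k)$, it suffices to find more than $(m-g(k))/(3k-1)$ edge-disjoint fans; since each fan eats exactly $k$ inside edges, one gets roughly $m/k$ fans, and $m/k>(m-g(k))/(3k-1)$ with room to spare. The work is in actually extracting those fans: the paper first uses vertices of inside-degree $\geqslant k$ as centres (Step~1), then uses size-$k$ matchings inside a part with the centre placed on the opposite side (Step~2), and bounds the leftover inside edges by Chv\'atal--Hanson (Lemma~\ref{NuDelta}). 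Your greedy peel, by contrast, only guarantees $t/(3k)$ fans and cannot close the gap; and your supersaturation detour is unnecessary, since a single copy suffices to continue peeling whenever $e(G_i)>\ex(n,F_{k,3})$.

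Two further omissions: you do not reduce to $\delta(G)\geqslant\lfloor n/2\rfloor$ (the paper does this via Lemma~\ref{Lemma:deltaGgeq}, and it is needed for the common-neighbour counts), and you do not handle the regime where $m$ is bounded by a constant. In that regime the $m/k$ versus $(m-g(k))/(3k-1)$ comparison can fail, and the paper runs a separate, delicate argument (Case~2, Claims~\ref{Fact:r-2 balance}--\ref{s+1}) locating $s+1$ edge-disjoint fans by exploiting the near-exact Tur\'an structure and a low-degree vertex in the peeled graph. Finally, your uniqueness paragraph presupposes the strict inequality $\phi(G,F_{k,3})<\ex(n,F_{k,3})$ whenever $e(G)>\ex(n,F_{k,3})$; that strictness is exactly what the packing argument must deliver, so it cannot be quoted as ``automatic.''
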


In this paper, we verify Conjecture~\ref{CONJ: Pikhurko and Sousa} for $(k,r)$-fans for $k\ge 2$ and $r\ge 3$ and hence generalizes Theorem~\ref{THM: Liu-Sousa}.  Our main result is the following.

\begin{thm}\label{THM: Main}
For $k\geqs 2$ and $r\geqs3$, there exists $n_1=n_1(k,r)$ such that $\phi(n,\Fkr)=\ex(n,\Fkr)$ for all $n\geqs{n_0}$. Moreover, the only graphs attaining $\ex(n,\Fkr)$ are the extremal graphs for $\Fkr$.
\end{thm}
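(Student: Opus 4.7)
The plan is to verify both directions of $\phi(n,\Fkr)=\ex(n,\Fkr)$. The lower bound is immediate: taking $G$ to be any extremal $\Fkr$-free graph on $n$ vertices gives $\phi(G,\Fkr)=e(G)=\ex(n,\Fkr)$, because every $\Fkr$-decomposition of an $\Fkr$-free graph consists entirely of single edges. For the upper bound we exploit the identity $\phi(G,\Fkr)=e(G)-p_{\Fkr}(G)(e(\Fkr)-1)$, so it suffices to establish
$$p_{\Fkr}(G)\geqs\frac{e(G)-\ex(n,\Fkr)}{e(\Fkr)-1}$$
for every $n$-vertex graph $G$. Fix a small constant $\gm=\gm(k,r)>0$ (to be chosen in the course of the proof) and split on the structure of $G$.

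If $e(G)\leqs\ex(n,\Fkr)-\gmnn$ the required inequality is trivial. If $e(G)\geqs\ex(n,\Fkr)-\gmnn$ but $G$ is not $\gm$-close to the Tur\'an graph $T_{n,r-1}$, the Erd\"{o}s--Simonovits stability theorem together with a supersaturation argument yields $\Omega(n^{(r-1)k+1})$ copies of $\Fkr$ in $G$, and a standard greedy extraction produces enough edge-disjoint copies to satisfy the inequality with room to spare. The substantive case is when $G$ is simultaneously $\gm$-extremal and close to $T_{n,r-1}$.

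In that main case, stability supplies a partition $V(G)=V_1\cup\cdots\cup V_{r-1}$ with $\lowbound\leqs|V_i|\leqs\upbound$ for every $i$, at most $O(\gmnn)$ edges inside the parts, and at most $O(\gmnn)$ missing crossing edges. The extremal $\Fkr$-free graphs (for $n$ large) are of the form $T_{n,r-1}$ together with a graph $F^*$ embedded in one class whose maximum degree and matching number are each at most $k-1$ -- the two conditions that precisely prevent forming an $\Fkr$ with centre inside or outside the altered class, respectively. Using this as a template we run a two-stage peeling. First we isolate at most $\bad$ \emph{bad} vertices (those whose local degree profile deviates from what $T_{n,r-1}$ would predict) and greedily extract an edge-disjoint copy of $\Fkr$ around each one, exploiting the local edge surplus to build the $k$ copies of $K_r$ sharing only that centre. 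Next, on the residual nearly-$(r-1)$-partite graph, each remaining excess edge (either within a part, or completing a forbidden structure across parts) is charged to a newly constructed edge-disjoint copy of $\Fkr$ using vertices drawn from all $r-1$ parts; throughout we track the set of \emph{inactive} vertices (those whose edge budget in some part has been too heavily used), ensuring its size never exceeds $\inactive$ so that the extraction can always continue.

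The uniqueness in the ``moreover'' clause follows from the same analysis: if $G$ is not an extremal $\Fkr$-free graph then either $e(G)<\ex(n,\Fkr)$, in which case the identity forces $\phi(G,\Fkr)<\ex(n,\Fkr)$ immediately, or the argument above extracts \emph{strictly} more than $(e(G)-\ex(n,\Fkr))/(e(\Fkr)-1)$ edge-disjoint copies of $\Fkr$, again giving $\phi(G,\Fkr)<\ex(n,\Fkr)$. The principal obstacle is the near-extremal case: one must first pin down the exact extremal structure (whose description involves parity/divisibility conditions on $k$ relative to $|V_1|$), and then verify that \emph{every} excess edge -- even those inside a part and incident to vertices already participating in many extracted copies -- can still be absorbed into a fresh edge-disjoint $\Fkr$ at exactly the rate $1/(e(\Fkr)-1)$. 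The bookkeeping becomes delicate for $r\geqs 4$, since each new fan requires live vertices distributed across $r-1$ parts and the inactive set must remain of size at most $\inactive$ throughout.
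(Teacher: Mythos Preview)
Your outline captures the paper's Case~1 (large surplus $m=\sum_i e(V_i)$) reasonably well, but it has a genuine gap in the near-extremal regime where $m$ is only slightly larger than $g(k)$. The greedy/absorption scheme you describe extracts one copy of $\Fkr$ per $k$ internal edges (each fan, whether centred at a high-internal-degree vertex or built on a matching of size $k$, consumes exactly $k$ edges from $\bigcup_i E(G[V_i])$), and after the process terminates one is left with up to $(r-1)k(k-1)$ internal edges that cannot be packed further (each $G[V_i]$ has both maximum degree and matching number below $k$). Thus the number of fans obtained is at most $\lfloor (m-(r-1)k(k-1))/k\rfloor$, and this exceeds $(m-g(k))/(e(\Fkr)-1)$ only when $m$ is above an explicit threshold $m_1=m_1(k,r)$. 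For $g(k)<m\leqs m_1$ your plan simply does not produce enough edge-disjoint fans, and no amount of bookkeeping with bad/inactive vertices repairs this: the shortfall is arithmetic, not structural.

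The paper handles this regime (its Case~2) by an entirely different mechanism. One first observes that with $m\leqs m_1$ the partition is essentially exact ($|V_i|\in\{\lfloor n/(r-1)\rfloor,\lceil n/(r-1)\rceil\}$), then writes $e(G)-\ex(n,\Fkr)\in[s(e(\Fkr)-1),(s+1)(e(\Fkr)-1))$ for a bounded integer $s$, removes $s$ edge-disjoint fans to get $G'$, and locates a vertex $u$ with $\deg_{G'}(u)\leqs\lfloor\frac{r-2}{r-1}n\rfloor-s$. Since $e(G'-u)\geqs\ex(n-1,\Fkr)$ and $G'-u$ is shown not to be extremal, a further copy of $\Fkr$ exists in $G'-u$, giving the required $s+1$ copies. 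Finding such a $u$ requires a case analysis (many high-internal-degree vertices; a large $F_{qk,r}$; neither) that your proposal does not anticipate. A secondary point: rather than your Erd\H{o}s--Simonovits-plus-supersaturation split on $e(G)$, the paper invokes the \"Ozkahya--Person stability lemma (stated for $\phi(G,H)\geqs\ex(n,H)-\delta n^2$ directly), which immediately gives the good partition without a separate far-from-Tur\'an case; your supersaturation route would need to produce $\Omega(n^2)$ \emph{edge-disjoint} fans when $e(G)$ is close to $\binom{n}{2}$, which is not an immediate consequence of having many copies.
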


The remaining of the paper is arranged as follows. Section 2 gives some lemmas. The proof of Theorem~\ref{THM: Main} is given in Section 3.

\section{Lemmas}
The extremal graphs for $\Fkr$ was determined by  Chen, Gould, Pfender and Wei~\cite{Wei}.
\begin{lem}[Theorem 2 in~\cite{Wei}]\label{THM:Fkr}
For every $k\geqslant1$ and $r\geqslant2$ and every $n\geqslant16k^3r^8$, $\ex(n,F_{k,r})=\ex(n,K_r)+g(k)$, where $$g(k)=\left\{
\begin{aligned}
&k^2-k & &{if~k~is~odd,}\\
&k^2-\frac{3}{2}k & &{if~k~is~even.}\\
\end{aligned} \right.$$
And one of its extremal graphs, denoted by $G_{n,k,r}$, constructed as follows. If $k$ is odd, $G_{n,k,r}$ is a Tur\'an graph $T_{n,r-1}$ with two vertex disjoint copies of $K_k$ embedding in one partite set. If $k$ is even, $G_{n,k,r}$ is a Tur\'an graph $T_{n,r-1}$ with a graph on $2k-1$ vertices, $k^2-\frac{3}{2}k$ edges, and maximum degree $k-1$ embedded in one partite set.
\end{lem}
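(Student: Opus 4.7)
\emph{Lower bound.} Let $H$ denote the embedded graph in $G_{n,k,r}$ (two disjoint $K_k$'s when $k$ is odd; the $(2k-1)$-vertex graph of maximum degree $k-1$ when $k$ is even), placed inside a distinguished partite set $V_1$ of the Tur\'an backbone $T_{n,r-1}$. Suppose for contradiction some vertex $c$ is the center of a copy of $\Fkr$; then $G_{n,k,r}[N(c)]$ contains $k$ vertex-disjoint copies of $K_{r-1}$. The $(r-1)$-partite backbone is $K_r$-free, and in any $N(c)$ a $K_{r-1}$ must pick up vertices from all $r-2$ non-central parts, so any additional clique structure has to exploit an edge of $H$. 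A short case analysis splits into: if $c\in V_1$, each of the $k$ disjoint $K_{r-1}$'s absorbs a distinct $H$-neighbor of $c$, forcing $\deg_H(c)\geqs k$; if $c\notin V_1$, each absorbs the two endpoints of an $H$-edge and these edges must be pairwise disjoint, forcing $\nu(H)\geqs k$. Both are ruled out by the construction, so $G_{n,k,r}$ is $\Fkr$-free and $e(G_{n,k,r})=e(T_{n,r-1})+g(k)=\ex(n,K_r)+g(k)$.

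\emph{Upper bound --- stability phase.} Given an $\Fkr$-free graph $G$ on $n\geqs 16k^3r^8$ vertices with $e(G)\geqs\ex(n,K_r)+g(k)$, I apply a quantitative Erd\H{o}s--Simonovits stability argument (valid since $\chi(\Fkr)=r$) to produce a nearly balanced partition $V(G)=V_1\sqcup\cdots\sqcup V_{r-1}$ with $\bigl||V_i|-\aver\bigr|$ of order $\sgmn$ and only $O(\gmnn)$ atypical edges (in-part edges plus missing between-part edges). An iterative clean-up, deleting any vertex whose link differs too much from the expected $(r-2)$-partite pattern, leaves a large subset on which the structure is essentially Tur\'an plus a controlled collection of in-part edges; the $\Fkr$-free assumption together with the edge budget forces only $O(k)$ vertices to be removed here.

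\emph{Upper bound --- structural phase.} On the cleaned portion I establish, using $\Fkr$-freeness and possibly relocating $O(k)$ vertices between parts, that (a) all remaining in-part edges lie in a single part $V_j$, and (b) the subgraph $H:=G[V_j]$ satisfies $\nu(H)\leqs k-1$ and $\Delta(H)\leqs k-1$. For (b): a vertex $u\in V_j$ with $\deg_H(u)\geqs k$ is itself the center of an $\Fkr$, since each of its $H$-edges together with a suitably chosen $(r-3)$-tuple of cross-part vertices yields a $K_{r-1}$ in $N(u)$, and disjoint cross-transversals are abundant when $n$ is large; similarly, a $k$-matching in $H$ lets any $c\in V_i$ with $i\neq j$ serve as a center. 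For (a): in-part edges residing in two distinct parts can be combined with common cross-neighbors (abundant for $n\geqs 16k^3r^8$) to weave together a center with $k$ vertex-disjoint $K_{r-1}$'s in its neighborhood. Finally, a direct Erd\H{o}s--Gallai-style calculation shows that every graph $H$ with $\nu(H),\Delta(H)\leqs k-1$ has at most $g(k)$ edges, with the extremal examples being exactly $2K_k$ for odd $k$ and a near-$(k-1)$-regular graph on $2k-1$ vertices for even $k$. Summing with the trivial between-part bound $\ecrossT\leqs e(T_{n,r-1})=\ex(n,K_r)$ yields $e(G)\leqs\ex(n,K_r)+g(k)$, and tracing the equality cases pins down $G_{n,k,r}$ as an extremal graph.

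\emph{Main obstacle.} The principal technical difficulty is step (a): ruling out simultaneous in-part edges in two different parts by explicitly building an $\Fkr$ via common cross-neighbors, which is precisely where the polynomial bound $n\geqs 16k^3r^8$ enters. A secondary subtlety is making the stability and clean-up phases fully quantitative (rather than merely $o(n^2)$-qualitative), so that the extremal number is pinned down exactly and the delicate distinction between the odd and even formulas for $g(k)$ can be read off the structure of the leftover graph $H$.
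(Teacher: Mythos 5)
First, a point of comparison: the paper does not prove this lemma at all --- it is imported verbatim as Theorem~2 of Chen, Gould, Pfender and Wei~\cite{Wei} and used as a black box. So there is no in-paper argument to measure your proposal against; what follows assesses the plan on its own terms.

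The lower-bound half of your plan is correct and essentially complete: every $K_r$ in $G_{n,k,r}$ must place at least two of its vertices in the distinguished part and hence use an edge of the embedded graph $H$, so a fan centered inside that part forces $\deg_H(c)\geqs k$ while one centered outside forces $\nu(H)\geqs k$, and the construction has $\Delta(H)=\nu(H)=k-1$. Your closing count is also the right one: ``$\nu,\Delta\leqs k-1$ implies at most $g(k)$ edges'' is exactly the Chv\'atal--Hanson bound $f(k-1,k-1)=g(k)$, i.e.\ Lemma~\ref{NuDelta} of this paper, so that step can be cited rather than rederived.

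The genuine gap is the upper bound, which is the entire content of the theorem and which your plan asserts rather than proves. Two concrete problems. First, the clean-up phase cannot be dismissed with ``only $O(k)$ vertices are removed'': each deleted vertex carries up to $n-1$ edges, which swamps the additive constant $g(k)$ you are trying to pin down exactly; to recover an exact extremal number you must account for those vertices' degrees precisely, typically by first forcing $\delta(G)\geqs\delta(T_{n,r-1})$ via the identity $\ex(n,\Fkr)-\ex(n-1,\Fkr)=\delta(T_{n,r-1})$ (the device of Lemmas~\ref{PROP: ex-p1} and~\ref{Lemma:deltaGgeq}) --- but that identity is a consequence of the statement being proved, so the induction must be set up with care and its base case is then the whole difficulty. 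Second, invoking ``a quantitative Erd\H{o}s--Simonovits stability argument'' does not yield the explicit threshold $n\geqs16k^3r^8$; generic stability gives a non-explicit $n_0$, and a polynomial bound requires a bespoke argument (Chen--Gould--Pfender--Wei indeed work directly with minimum degree and neighborhood counting rather than stability). Your steps (a) and (b) are plausible and provable by the embedding arguments you sketch, but as written they are claims standing in for the hard part of the proof.
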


\begin{lem}\label{PROP: ex-p1}
Let $G$ be an extremal graph on $n$ vertices for $\Fkr$. Then $\delta(G)\ge \lf\frac{r-2}{r-1}n\rf$.
\end{lem}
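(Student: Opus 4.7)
The plan is a standard vertex-deletion argument combined with an arithmetic identity for Tur\'an graphs. Take $G$ to be extremal on $n$ vertices for $\Fkr$, with $n$ large enough that Lemma~\ref{THM:Fkr} applies to both $n$ and $n-1$, and suppose for contradiction that some vertex $v\in V(G)$ satisfies $\deg_G(v)<\dtaT$. By Tur\'an's theorem, $\ex(m,K_r)=e(T_{m,r-1})$, so Lemma~\ref{THM:Fkr} gives $e(G)=e(T_{n,r-1})+g(k)$. Since $G-v$ is an $\Fkr$-free graph on $n-1$ vertices, the same lemma gives $e(G-v)\leqs e(T_{n-1,r-1})+g(k)$. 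Subtracting the two bounds,
$$\deg_G(v)\;=\;e(G)-e(G-v)\;\geqs\;e(T_{n,r-1})-e(T_{n-1,r-1}).$$

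The remaining step is to verify the identity $e(T_{n,r-1})-e(T_{n-1,r-1})=\dtaT$. Write $n-1=q(r-1)+s$ with $0\leqs s\leqs r-2$; then $T_{n,r-1}$ is obtained from $T_{n-1,r-1}$ by inserting one new vertex into a smallest part (of size $q$), so that vertex has degree $(n-1)-q=q(r-2)+s$ in $T_{n,r-1}$. A short case split (depending on whether $s+1<r-1$ or $s+1=r-1$) shows $\dtaT=q(r-2)+s$ as well. Plugging this into the previous display yields $\deg_G(v)\geqs\dtaT$, contradicting the choice of $v$.

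There is no substantive obstacle: once Lemma~\ref{THM:Fkr} is in hand, everything reduces to the one-line vertex-deletion inequality and the elementary Tur\'an-graph identity. The only point to watch is that Lemma~\ref{THM:Fkr} must be applicable at $n-1$ as well as at $n$, which is guaranteed by choosing the eventual threshold $n_1$ of Theorem~\ref{THM: Main} to be at least $16k^3r^8+1$.
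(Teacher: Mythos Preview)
Your proof is correct and follows essentially the same vertex-deletion argument as the paper: both use $\ex(n,\Fkr)-\ex(n-1,\Fkr)=e(T_{n,r-1})-e(T_{n-1,r-1})=\delta(T_{n,r-1})=\dtaT$ together with the extremality of $G$ and the $\Fkr$-freeness of $G-v$. The only cosmetic difference is that the paper phrases the contradiction as ``$e(G-v)>\ex(n-1,\Fkr)$, so $G-v$ contains a copy of $\Fkr$'', whereas you take the contrapositive and deduce $\deg_G(v)\geqs\dtaT$ directly.
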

\begin{proof}
Suppose to the contrary that there is a vertex $v\in V(G)$ with $\deg_G(v)<\dtaT=\delta(T_{n,r-1})$. Let $G'= G-v$. Then $$e(G')\geqs e(G)-\deg_G(v)\ge \ex(n,\Fkr)-\delta(T_{n,r-1})+1=\ex(n-1,F_{k,r})+1$$ since $\ex(n,F_{k,r})-\ex(n-1,F_{k,r})=\delta(T_{n,r-1})$. By Lemma~\ref{THM:Fkr}, $G'$ (and hence $G$) contains a copy of $\Fkr$ as its subgraph, a contradiction.
\end{proof}

\begin{lem}\label{Lemma:deltaGgeq}
 Let $n_0$ be an integer and let $G$ be a graph on $n\ge n_0+{n_0\choose 2}$ vertices with $\phi(G,\Fkr)=\ex(n,\Fkr)+j$ for some integer $j>0$. Then $G$ contains a subgraph $G'$ on $n'> n_0$ vertices  such that $\delta(G')\geqs\delta(T_{n-i,r-1})$ and $\phi(G',\Fkr)\geqs\ex(n-i,\Fkr)+j+i$.
\end{lem}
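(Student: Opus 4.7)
The plan is to iteratively delete low-degree vertices from $G$, tracking how $\phi(\cdot,\Fkr)$ decreases at each step. Set $G_0:=G$ and $n_i:=n-i$. As long as $G_i$ contains a vertex $v$ with $\deg_{G_i}(v)<\delta(T_{n_i,r-1})$, delete one such $v$ to obtain $G_{i+1}$. Let $i^\ast$ be the first index at which no such vertex exists and put $G':=G_{i^\ast}$, $n':=n_{i^\ast}$. By construction $\delta(G')\geqs\delta(T_{n',r-1})$, so it only remains to verify the lower bound on $\phi(G',\Fkr)$ and to check that $n'>n_0$.

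The key one-step estimate is $\phi(H-v,\Fkr)\geqs\phi(H,\Fkr)-\deg_H(v)$ for any graph $H$ and any $v\in V(H)$: any $\Fkr$-decomposition of $H-v$ extends to one of $H$ by adding the $\deg_H(v)$ edges at $v$ as single edges. Iterating this with $\deg_{G_t}(v_t)\leqs\delta(T_{n_t,r-1})-1$ yields
\[
\phi(G_i,\Fkr)\geqs\phi(G,\Fkr)+i-\sum_{t=0}^{i-1}\delta(T_{n-t,r-1}).
\]
Exactly as in the proof of Lemma~\ref{PROP: ex-p1}, Lemma~\ref{THM:Fkr} gives $\ex(n-t,\Fkr)-\ex(n-t-1,\Fkr)=\delta(T_{n-t,r-1})$, so the sum telescopes to $\ex(n,\Fkr)-\ex(n_i,\Fkr)$. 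Combined with the hypothesis $\phi(G,\Fkr)=\ex(n,\Fkr)+j$, this yields $\phi(G_i,\Fkr)\geqs\ex(n_i,\Fkr)+j+i$ for every $i\leqs i^\ast$.

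Finally, I rule out $n'\leqs n_0$ by contradiction. Under that assumption, $i^\ast\geqs n-n_0\geqs\binom{n_0}{2}$, and the bound above together with $\ex(n',\Fkr)\geqs 0$ and $j\geqs 1$ gives
\[
\phi(G',\Fkr)\geqs 1+\binom{n_0}{2}>\binom{n'}{2}\geqs e(G')\geqs\phi(G',\Fkr),
\]
a contradiction. Hence $n'>n_0$. I do not foresee a serious obstacle, since the argument is essentially clean iterative bookkeeping; the only mild subtlety is that the telescoping step requires each $n_t$ in the iteration to be large enough for Lemma~\ref{THM:Fkr} to apply, which will be automatic once $n_0$ is chosen above the threshold $16k^3r^8$ in the intended applications.
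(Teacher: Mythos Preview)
Your proposal is correct and follows essentially the same approach as the paper: iteratively delete a vertex of degree below $\delta(T_{n_i,r-1})$, use the one-step inequality $\phi(H-v,\Fkr)\geqs\phi(H,\Fkr)-\deg_H(v)$ together with the identity $\ex(n_i,\Fkr)-\ex(n_i-1,\Fkr)=\delta(T_{n_i,r-1})$ to telescope, and rule out $n'\leqs n_0$ by an edge-count contradiction. Your write-up is in fact a bit more careful than the paper's, both in making the telescoping explicit and in using $j\geqs1$ to get the strict inequality at the end; you also correctly flag the threshold subtlety for Lemma~\ref{THM:Fkr}, which the paper leaves implicit.
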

\begin{proof}
If $\delta(G)\ge \lf\frac{n}2\rf$, then $G$ is the desired graph and we have nothing to do. So assume that $\delta(G)<\lf\frac{n}2\rf$.
Let $v\in V(G)$ with $\deg_G(v)<\dtaT$ and set $G_1= G-v$. Then $\phi(G_1,\Fkr)\geqs\phi(G,\Fkr)-\deg_G(v)\geqs \ex(n,\Fkr)+j-\delta(T_{n,r-1})+1=\ex(n-1,F_{k,r})+j+1$, since $\ex(n,F_{k,r})-\ex(n-1,F_{k,r})=\delta(T_{n,r-1})$ by Lemma~\ref{THM:Fkr}. We may continue this procedure until we get a graph $G'$ on $n-i$ vertices with $\delta(G')\geqs\lf\frac{r-2}{r-1}(n-i)\rf$ for some $i<n-n_0$, or until $i=n-n_0$. But the latter case can not occur since $G'$ is a graph on $n_0$ vertices but $e(G')\geqs\ex(n_0,F_{k,r})+j+i\geqs n-n_0>\binom{n_0}{2}$, which is impossible.
\end{proof}


The following stability lemma due to \"{O}zkahya and Person~\cite{edge-critical case} is very important. 

\begin{lem}[\cite{edge-critical case}]\label{LEM: partition} Let $H$ be a graph with $\chi(H)=r\geqs3$ and $H\neq{K_r}$. Then, for every $\gm>0$ there exists $\delta>0$ and $n_0=n_0(H,\gm)\in\mathbb{N}$ such that for every graph $G$ on $n\geqs{n_0}$ vertices with $\phi(G, H)\geqs\ex(n,H)-\delta{n^2}$, then there exists a partition of $V(G)=V_1\dot{\cup}...\dot{\cup}V_{r-1}$ such that $\sum^{r-1}_{i=1}e(V_i)<\gm n^2$.
\end{lem}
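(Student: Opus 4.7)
The plan is to reduce the statement to the Erdős--Simonovits stability theorem for $H$-free graphs. Starting from the identity $\phi(G,H)=e(G)-p_{H}(G)(e(H)-1)$, where $p_{H}(G)$ is the maximum number of edge-disjoint copies of $H$ in $G$, I would pass to the subgraph $G'\subseteq G$ obtained by deleting one edge from each member of a maximum edge-disjoint $H$-packing. By construction $G'$ is $H$-free, so $e(G')=e(G)-p_{H}(G)\leqslant \ex(n,H)$. Combining this with the hypothesis $\phi(G,H)\geqslant \ex(n,H)-\delta n^{2}$, I obtain both $p_{H}(G)\geqslant e(G)-\ex(n,H)$ and $p_{H}(G)(e(H)-1)=e(G)-\phi(G,H)\leqslant e(G)-\ex(n,H)+\delta n^{2}$, and subtracting gives $(e(H)-2)\,p_{H}(G)\leqslant \delta n^{2}$. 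Since $\chi(H)\geqslant 3$ forces $e(H)\geqslant 3$, this yields $p_{H}(G)\leqslant \delta n^{2}$, and in turn $e(G)-\ex(n,H)\leqslant \delta n^{2}$.

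Next I would verify that $G'$ itself is still essentially extremal. Using $e(H)\geqslant 2$,
\[e(G')=e(G)-p_{H}(G)\;\geqslant\; e(G)-p_{H}(G)(e(H)-1)\;=\;\phi(G,H)\;\geqslant\; \ex(n,H)-\delta n^{2}.\]
Thus $G'$ is an $H$-free graph on $n$ vertices with at least $\ex(n,H)-\delta n^{2}$ edges. The Erdős--Simonovits stability theorem, applied with error parameter $\gamma/2$, supplies a threshold $\delta_{0}=\delta_{0}(H,\gamma)$ and an $n_{0}$ so that for $\delta\leqslant \delta_{0}$ and $n\geqslant n_{0}$ there is a partition $V(G)=V_{1}\dot{\cup}\cdots\dot{\cup}V_{r-1}$ with $\sum_{i=1}^{r-1}e_{G'}(V_{i})\leqslant \tfrac{\gamma}{2}\,n^{2}$.

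Finally I would transfer this partition back to $G$ itself: since $|E(G)\setminus E(G')|=p_{H}(G)\leqslant \delta n^{2}$,
\[\sum_{i=1}^{r-1}e_{G}(V_{i})\;\leqslant\; \sum_{i=1}^{r-1}e_{G'}(V_{i})+p_{H}(G)\;\leqslant\; \tfrac{\gamma}{2}\,n^{2}+\delta n^{2},\]
which is below $\gamma n^{2}$ as soon as $\delta<\gamma/2$. Setting $\delta:=\min\{\delta_{0}(H,\gamma),\,\gamma/3\}$ closes the argument. There is no single hard step: the proof is the standard reduction from an edge-decomposition problem to a Turán-type stability problem via the ``destroy each copy of $H$ with one edge'' trick, and the only real care-point is chaining the input parameter $\delta$ to the output parameter $\gamma$ through the two small constants lost in the transfer from $G'$ to $G$.
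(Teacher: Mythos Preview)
The paper does not supply its own proof of this lemma; it is quoted from \"{O}zkahya and Person~\cite{edge-critical case}. So the relevant question is whether your argument stands on its own.

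It does not, and the gap is at the very first step. You assert that if $H_{1},\dots,H_{p}$ is a maximum family of edge-disjoint copies of $H$ in $G$ and you delete one edge $e_{i}\in E(H_{i})$ from each, the resulting graph $G'$ is $H$-free. This is false: destroying each \emph{packed} copy with one edge says nothing about copies of $H$ that were not in the packing. Concretely, take $H=K_{3}$ and $G=K_{6}$. Any two triangles in $K_{6}$ may or may not share an edge, and a maximum edge-disjoint triangle packing has at most $5$ members (since $5\cdot 3=15=e(K_{6})$); in fact it has at most $4$. Deleting one edge from each still leaves at least $11$ edges and plenty of triangles. More to the point, your derived inequality $e(G)-p_{H}(G)\leqslant \ex(n,H)$ is itself false here: it would force $p_{K_{3}}(K_{6})\geqslant 15-9=6$, requiring $18$ edges in $K_{6}$.

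Since that inequality is the sole source of your upper bound $p_{H}(G)(e(H)-2)\leqslant \delta n^{2}$, everything downstream collapses: you no longer know that $p_{H}(G)$ is small, you cannot conclude $e(G)\leqslant \ex(n,H)+\delta n^{2}$, and you have no $H$-free subgraph with nearly extremal edge count to feed into Erd\H{o}s--Simonovits. What \emph{is} true is that deleting \emph{all} edges of the packed copies gives an $H$-free graph $G''$ with $e(G'')=\phi(G,H)-p_{H}(G)$, but this only helps once you already control $p_{H}(G)$ from above, which is exactly the missing ingredient. The proof in \cite{edge-critical case} obtains that control via the Szemer\'edi regularity lemma (counting and embedding many edge-disjoint copies of $H$ whenever $G$ is far from $(r-1)$-partite); a bare reduction to stability as you outline is not enough.
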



\begin{lem}[\cite{Hanson}]\label{NuDelta} For any graph $G$ with maximum degree $\Delta\geqs1$ and matching number $\nu\geqs1$, then $e(G)\leqslant f(\nu,\Delta)$. Here, $f(\nu,\Delta)=\nu\Delta+{\lf\frac{\Delta}{2}\rf}{\lf\frac{\nu}{\lc\Delta/2\rc}\rf}$.
\end{lem}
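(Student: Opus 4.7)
\medskip
\noindent\textbf{Proof proposal.} My plan is to induct on the matching number $\nu$. The base case $\nu=1$ is immediate: any two edges of $G$ must share a vertex, so (ignoring isolated vertices) $G$ is either a star $K_{1,d}$ with $d\leqs\Delta$ or a triangle $K_3$, giving $e(G)\leqs\max\{\Delta,3\}=f(1,\Delta)$.

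For the inductive step, let $G$ be extremal with $\nu(G)=\nu\geqs 2$ and $\Delta(G)\leqs\Delta$, and fix a maximum matching $M=\{u_1v_1,\ldots,u_\nu v_\nu\}$. Set $U=V(G)\setminus V(M)$; maximality of $M$ forces $U$ to be an independent set, so every edge of $G$ meets $V(M)$. Deleting the endpoints of a carefully chosen edge $u_iv_i\in M$ yields $G'=G-\{u_i,v_i\}$ with $\nu(G')\leqs\nu-1$ and $\Delta(G')\leqs\Delta$, and the inductive hypothesis then gives
\[
e(G)\leqs f(\nu-1,\Delta)+\deg_G(u_i)+\deg_G(v_i)-1.
\]
Hence it suffices to select $i$ so that $\deg_G(u_i)+\deg_G(v_i)\leqs f(\nu,\Delta)-f(\nu-1,\Delta)+1$.

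I expect the selection of this ``right'' matching edge to be the main obstacle, and I plan to handle it via the Gallai--Edmonds decomposition. If some vertex of $U$ has a neighbour in $V(M)$, then analysing a shortest $M$-alternating path from $U$ locates a matching edge $u_iv_i$ whose endpoints have degrees constrained by the non-existence of an augmenting path, yielding a combined degree strictly below $2\Delta$. If instead $M$ is perfect, then the bound $\sum_{v\in V(M)}\deg_G(v)\leqs 2\nu\Delta$, averaged over the $\nu$ edges of $M$, produces an edge with combined endpoint degree at most $2\Delta$; a short calculation shows this closes the induction except when $G$ is essentially a disjoint union of cliques $K_{\Delta+1}$. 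That borderline case is precisely the sharp configuration: $\lfloor\nu/\lceil\Delta/2\rceil\rfloor$ vertex-disjoint copies of $K_{\Delta+1}$ together with a residual factor-critical piece absorbing the remaining $\nu\bmod\lceil\Delta/2\rceil$ matching slots, whose total edge count one verifies equals $\nu\Delta+\lfloor\Delta/2\rfloor\lfloor\nu/\lceil\Delta/2\rceil\rfloor$, giving both the extremal construction and tightness.
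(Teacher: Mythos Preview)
The paper does not prove this lemma at all; it is quoted verbatim as a known result of Chv\'atal and Hanson and used as a black box (only the crude consequence $e(G_b[V_i])\leqs f(k-1,k-1)\leqs k(k-1)$ is ever invoked). So there is no ``paper's own proof'' to compare your attempt against.

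That said, your inductive sketch has a real quantitative gap. To close the induction you need a matching edge $u_iv_i$ with
\[
\deg_G(u_i)+\deg_G(v_i)\ \leqs\ f(\nu,\Delta)-f(\nu-1,\Delta)+1,
\]
and since $f(\nu,\Delta)-f(\nu-1,\Delta)=\Delta$ except at the sparse values $\nu\equiv 0\pmod{\lceil\Delta/2\rceil}$ (where it jumps to $\Delta+\lfloor\Delta/2\rfloor$), you typically must exhibit a matching edge whose endpoints have combined degree at most $\Delta+1$. Your alternating-path argument only promises ``strictly below $2\Delta$'', and your averaging argument in the perfect-matching case only gives $\leqs 2\Delta$; neither is strong enough. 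For a concrete failure, take $G=K_4$ with $\nu=2$, $\Delta=3$: every edge has combined degree $6$ while your target is $f(2,3)-f(1,3)+1=5$, so the inductive step as written does not go through. (The perfect-matching case is in fact trivial via the handshake bound $e(G)\leqs n\Delta/2=\nu\Delta\leqs f(\nu,\Delta)$, but the non-perfect case genuinely requires a finer structural count---for instance, exploiting that each matching edge has at most one endpoint adjacent to $U$, and bounding edges inside $V(M)$ and between $V(M)$ and $U$ separately---rather than reducing everything to one well-chosen edge deletion.)
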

\section{Proof of Theorem \ref{THM: Main}}

The lower bound $\phi(n,H)\geqs\ex(n,H)$ is trivial by the definition of $\phi(n, H)$ and $\ex(n, H)$. To prove Theorem \ref{THM: Main}, it is sufficient to prove that $\phi(n,\Fkr)\leqs\ex(n,\Fkr)$ and the equality holds only for extremal graphs for $\Fkr$. Our proof is motivated by the one in~\cite{k-fan}. In outline, for every graph $G$ with $\phi(G,\Fkr)\geqs\ex(n,\Fkr)$ and $G$ is not an extremal graph for $F_{k,r}$, we will find sufficiently many edge-disjoint copies of $\Fkr$ in $G$ which would imply that $\phi(G,\Fkr)=e(G)-p_{\Fkr}(G)(e(\Fkr)-1)<\ex(n,\Fkr)$, a contradiction. In other words, we will prove that $$p_{\Fkr}(G)>\frac{e(G)-\ex(n,\Fkr)}{e(\Fkr)-1}.$$

For convenience, we set some constants as follows.
\begin{eqnarray*}
&&\gm=(40kr^4)^{-2},\\
&&n_0=n_0(\Fkr,\gm)\  (\mbox{ which is determined by $\Fkr$ and $\gm$ by applying Lemma~\ref{LEM: partition}}),\\
&&\alpha=\sqrt{1-\frac{r-1}{r-2}\gm},\\
&&m_1=m_1(k,r)=\frac{(e(\Fkr)-1)(r-1)k(k-1)-kg(k)}{e(\Fkr)-1-k},\\
&&m_2=m_2(k,r)=\frac{(e(\Fkr)-1)(r-1)k(k-1)-kg(k)}{(e(\Fkr)-1)/2-k},\\
&&n_1=n_1(k,r)=1+\max\{\lc\frac{r-1}{\alpha}\rc,n_0+\binom{n_0}{2},16(k+1)^3r^8+6(k+1)^2r^3m_1\}.
\end{eqnarray*}

Now suppose that $G$ is a graph on $n>n_1$ vertices, with $\phi(G,\Fkr)\geqs\ex(n,\Fkr)$ and   $G$ is not an extremal graph for $\Fkr$. Then $e(G)>\ex(n,\Fkr)$. By Lemma~\ref{Lemma:deltaGgeq}, we may assume that $\delta(G)\ge\dtaT$. Note that $\chi(F_{k,r})=r$. Let $V_1,...,V_{r-1}$ be a partition of $V(G)$ such that $\ecrossG$ is maximized. Let $m=\einG$.  By  Lemma \ref{LEM: partition} and the choice of the partition of $V(G)$, we have $m<\gm n^2$. By Lemma~\ref{THM:Fkr}, observe that
$$m=e(G)-\ecrossG>\ex(n,\Fkr)-e(T_{n,r-1})=g(k),$$
and that
$$e(G)=m+\ecrossG\leqs m+e(T_{r-1,n})=\ex(n,\Fkr)+m-g(k).$$
So to prove Theorem \ref{THM: Main}, it suffices to show that
$$p_{\Fkr}(G)> \frac{m-g(k)}{e(\Fkr)-1}\ (\ge \frac{e(G)-\ex(n,\Fkr)}{e(\Fkr)-1}).$$

The following claim asserts that the partition $V(G)=V_1\dot{\cup}V_2\dot{\cup}\cdots\dot{\cup}V_{r-1}$ is very close to balance.

\begin{claim}\label{Fact:balance}
For every $i\in [1,r-1]$, $||V_i|-\aver|\leqs\error$.
\end{claim}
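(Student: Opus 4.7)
\medskip
\noindent The plan is to exploit the fact that $m<\gm n^2$ is small, so almost all edges of $G$ cross the partition, while $e(G)>\ex(n,\Fkr)\geqs e(T_{n,r-1})$ is essentially as large as $e(T_{n,r-1})$. Consequently the total cross-edge count is close to the maximum possible over all $(r-1)$-partitions of $[n]$, which is attained by the balanced partition; this will force the sizes $|V_i|$ to be nearly equal.

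Concretely, write $\epsilon_i=|V_i|-\aver$, so that $\sum_{i=1}^{r-1}\epsilon_i=0$. A direct expansion gives
\begin{equation*}
\ecrossT=\tfrac{1}{2}\Bigl(n^2-\sum_{i=1}^{r-1}|V_i|^2\Bigr)=\frac{(r-2)n^2}{2(r-1)}-\tfrac{1}{2}\sum_{i=1}^{r-1}\epsilon_i^2.
\end{equation*}
On the other hand, using $e(G)>\ex(n,\Fkr)$, the partition bound $m<\gm n^2$, and Lemma~\ref{THM:Fkr},
\begin{equation*}
\ecrossG=e(G)-m>e(T_{n,r-1})-\gm n^2.
\end{equation*}
A routine estimate of the Tur\'an number yields $e(T_{n,r-1})\geqs\frac{(r-2)n^2}{2(r-1)}-\frac{r-1}{2}$. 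Combining $\ecrossG\leqs\ecrossT$ with the two displays and rearranging gives
\begin{equation*}
\tfrac{1}{2}\sum_{i=1}^{r-1}\epsilon_i^2 \leqs \gm n^2+\tfrac{r-1}{2}.
\end{equation*}
For $n\geqs n_1$ the additive term $\tfrac{r-1}{2}$ is dominated by $\gm n^2$, so $\sum_{i=1}^{r-1}\epsilon_i^2\leqs 4\gm n^2$. Hence $\epsilon_i^2\leqs 4\gm n^2$ for every $i$, i.e., $|\,|V_i|-\aver\,|\leqs 2\sgmn$, as claimed.

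The only non-routine step is the final absorption: with $\gm=(40kr^4)^{-2}$ the inequality $\tfrac{r-1}{2}\leqs\gm n^2$ reduces to a polynomial lower bound on $n$ that is comfortably implied by $n_1\geqs 16(k+1)^3r^8$. Notice that the max-cut property of the chosen partition is not needed for this particular claim; only the upper bound $m<\gm n^2$ supplied by Lemma~\ref{LEM: partition} together with the lower bound $e(G)>\ex(n,\Fkr)$ enter the argument.
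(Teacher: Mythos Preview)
Your proof is correct and follows essentially the same approach as the paper: both compare the lower bound $e(G)>e(T_{n,r-1})$ against the upper bound $e(G)\leqs\ecrossT+m<\ecrossT+\gm n^2$ to force the part sizes close to $\aver$. The only difference is algebraic: the paper isolates the part with the largest deviation and applies Jensen's inequality to the remaining $r-2$ parts, whereas you use the identity $\ecrossT=\tfrac12(n^2-\sum|V_i|^2)=\tfrac{(r-2)n^2}{2(r-1)}-\tfrac12\sum\epsilon_i^2$ directly, which is a bit cleaner and avoids the WLOG assumption on where the maximum deviation occurs.
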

\begin{proof} Let $a=\max\{\left||V_j|-\aver\right|,~j\in[1,r-1]$. Wlog, suppose that $|V_1|-\aver=a$, then
\begin{eqnarray*}
e(G)&=&\ecrossG+m\\
&\leqs&\ecrossT+m\\
&=&|V_1|(n-|V_1|)+\ecrossTT+m\\
&=&|V_1|(n-|V_1|)+\frac12\left[\left(\sum_{j=2}^{r-1}|V_j|\right)^2-\sum_{j=2}^{r-1}|V_j|^2\right]+m\\
&\leqs& |V_1|(n-|V_1|)+\frac12(n-|V_1|)^2-\frac{1}{2(r-2)}(n-|V_1|)^2+m\\
&<&-\frac{r-1}{2(r-2)}a^2+\frac{r-2}{2(r-1)}n^2+\gm n^2.
\end{eqnarray*}
The fifth inequality holds by Jensen's inequality and the last inequality holds since $|V_1|=a+\aver$, and $m<\gm n^2$.

While on the other hand,
\begin{eqnarray*}
e(G)&>&ex(n,F_{k,r})> e(T_{n,r-1})\\
&\geqs&\binom{r-1}{2}\left(\aver-1\right)^2\\
&\geqs &\binom{r-1}{2}(\frac{\alpha n}{r-1})^2\\
&=&\frac{r-2}{2(r-1)}n^2\alpha^2.
\end{eqnarray*}
Therefore, $$\frac{r-1}{2(r-2)}a^2<\left[\frac{r-2}{2(r-1)}(1-\alpha^2)+\gm\right ]n^2<2\gmnn,$$ which implies that
$a<\sqrt{\frac{4(r-2)}{r-1}\gm}n<2\sgmn$.
\end{proof}

Recall that our purpose is to find more than $(m-g(k))/(e(\Fkr)-1)$ edge-disjoint copies of $\Fkr$. We continue the proof by considering two different cases.

\begin{case}$m>{m_1}.$
\end{case}
Set $t_1=\frac{n}{16kr^3}$ and $$t_2=\aver-2(r-3)\sgmn-2t_1-2.$$
For $v\in{V_i}$ ($i\in [1, r-1]$), we say that $v$ is $bad$ if $\deg_{G[V_i]}(v)>t_1$, otherwise $v$ is said to be $good$. For each bad vertex $v\in{V_i}$, choose $k\lceil{\deg_{G[V_i]}(v)/2k}\rceil$ edges which connect $v$ to good vertices in $V_i$. We keep these edges and delete other edges in $G[V_i]$ at $v$. We repeat this procedure to every bad vertex in $G$ and denote the resulting graph by $G_0$. This is possible since the number of bad vertices in $G$ is at most $$\frac{2m}{t_1}<\frac{2\gmnn}{t_1}=\bad<\frac{t_1}2.$$
Next we will find copies of $\Fkr$ in $G_0$. Each time we find a copy of $\Fkr$, we delete the edges of $\Fkr$, and let $G_s$ denote the graph obtained from $G_0$ after deleting the edges of $s$ copies of $\Fkr$.  For a vertex $v\in{V_i}$ in $G_s$, we say that $v$ is $active$ (in $G_s$) if $e_{G_s}(v, V_j)>t_2$ for every $j\neq{i}$, otherwise $v$ is said to be $inactive$.

We have the following two claims about $G_0$.

\begin{claim}\label{Fact:m(G_0)}
$$\sum_{i=1}^{r-1}e_{G_0}(V_i)\geqs{\frac{m}{2}}.$$
\end{claim}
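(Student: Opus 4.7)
The plan is to show that after the pruning, at least half of the internal edges in each $V_i$ survive. First I would verify that the pruning rule is well-defined. Since $\sum_{v\in B\cap V_i}\deg_{G[V_i]}(v) \geqs |B\cap V_i|\,t_1$ for each $i$, summing over $i$ gives $|B|\,t_1 \leqs 2m$, so $|B| \leqs 2m/t_1$. Combined with $m<\gm n^2$ and the chosen value of $\gm$, this yields $|B|<\bad<t_1/2$. In particular, each bad $v\in V_i$ with internal degree $d_v$ has at least $d_v-(|B\cap V_i|-1)>d_v-t_1/2$ good neighbors in $V_i$; since $d_v>t_1$ and $n>n_1$ is large compared with $k$ and $r$, this number exceeds $k\lceil d_v/(2k)\rceil \leqs d_v/2+k$, so the procedure can indeed select the required edges to good vertices.

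Next I would classify the edges contributing to $e(V_i)$ by the types of their endpoints: let $a_i,b_i,c_i$ denote the numbers of internal edges of $V_i$ having zero, one, and two bad endpoints, respectively, so that $e(V_i)=a_i+b_i+c_i$. The pruning acts very differently on the three classes: the $a_i$ good--good edges are never touched; all $c_i$ bad--bad edges are destroyed, because at a bad vertex only edges to good vertices are retained; and for each bad $v\in V_i$, exactly $k\lceil d_v/(2k)\rceil\geqs d_v/2$ of the bad--good edges incident to $v$ are kept. This is the essential place where the rule $k\lceil d_v/(2k)\rceil$ pays off: it retains at least half of each bad vertex's internal degree.

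Summing the retained contributions at the bad vertices and using the handshake identity $\sum_{v\in B\cap V_i}d_v = b_i+2c_i$, I get $e_{G_0}(V_i) \geqs a_i + (b_i+2c_i)/2 = a_i + b_i/2 + c_i \geqs (a_i+b_i+c_i)/2 = e(V_i)/2$. Summing over $i\in[1,r-1]$ yields $\sum_{i=1}^{r-1}e_{G_0}(V_i)\geqs m/2$, which is the claim. The main obstacle is the feasibility check in the first step: the constants $t_1$ and $\gm$ are engineered precisely so that $|B|<t_1/2$ holds with enough slack to guarantee that every bad vertex still has enough good neighbors to supply the selection; once this is in place, the rest of the argument reduces to the elementary counting identity above.
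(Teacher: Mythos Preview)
Your argument is correct and is essentially the same as the paper's: both hinge on the inequality $k\lceil d_v/(2k)\rceil \geqs d_v/2$ for each bad vertex together with the handshake identity $\sum_{v\ \mathrm{bad}}\deg_{G[V_i]}(v)=b_i+2c_i$, yielding $e_{G_0}(V_i)\geqs e_G(U_i)+\tfrac12\sum_{v\ \mathrm{bad}}\deg_{G[V_i]}(v)\geqs\tfrac12 e_G(V_i)$. The only difference is cosmetic (your $a_i,b_i,c_i$ notation versus the paper's $U_i$ and degree sums), and your inclusion of the feasibility check is redundant since the paper establishes it just before stating the claim.
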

\begin{proof}For every $i$, let $U_i\subset{V_i}$ be the set of good vertices in $V_i$. Then
\begin{eqnarray*}
e_{G_0}(V_i)&=&e_{G_0}(U_i)+\sum_{\begin{subarray}{c}v\in{V_i}\\{v \mbox{\ is bad}}\end{subarray}}\deg_{G_0[V_i]}(v)\\
&=&e_G(U_i)+\sum_{\begin{subarray}{c}v\in{V_i}\\{v \mbox{\ is bad}}\end{subarray}}k\lc{\frac{1}{2k}\deg_{G[V_i]}(v)}\rc\\
&\geqs&\frac12e_G(U_i)+\frac12\sum_{\begin{subarray}{c}v\in{V_i}\\{v \mbox{\ is bad}}\end{subarray}}\deg_{G[V_i]}(v)\\
&\geqs&\frac12e_G(V_i).
\end{eqnarray*}
\end{proof}

\begin{claim}\label{Fact:good is active} All of good vertices are active in $G_0$.
\end{claim}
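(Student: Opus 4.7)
The key first observation is that in passing from $G$ to $G_0$ the only edges deleted are edges with both endpoints inside some common partite class $V_\ell$, and they are deleted only at \emph{bad} vertices. In particular, for every vertex $v\in V_i$ and every $j\neq i$,
\[
 e_{G_0}(v, V_j) \;=\; e_G(v, V_j).
\]
Consequently it suffices to show $e_G(v, V_j)>t_2$ for every good $v\in V_i$ and every $j\neq i$.

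Fix such a good $v\in V_i$ and a target index $j_0\neq i$. The strategy is to bound $e_G(v, V_{j_0})$ from below by a three-step deficit accounting: start from the minimum-degree assumption $\deg_G(v)\geqs \dtaT \geqs \frac{r-2}{r-1}n - 1$ (we are in the case $\delta(G)\geqs\dtaT$ by Lemma~\ref{Lemma:deltaGgeq}), subtract the edges at $v$ inside its own class (at most $t_1$, by the definition of ``good''), and subtract $e_G(v,V_j)\leqs |V_j|$ over the remaining $r-3$ classes. Claim~\ref{Fact:balance} gives $|V_j|\leqs \aver + 2\sgmn$ for each $j$, so the third subtraction costs at most $(r-3)(\aver + 2\sgmn)$. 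Collecting the terms yields
\[
 e_G(v, V_{j_0}) \;\geqs\; \tfrac{r-2}{r-1}n - 1 - t_1 - (r-3)\bigl(\aver + 2\sgmn\bigr) \;=\; \aver - 2(r-3)\sgmn - t_1 - 1,
\]
which is strictly larger than $t_2 = \aver - 2(r-3)\sgmn - 2t_1 - 2$, by a margin of $t_1+1>0$. Hence $v$ is active in $G_0$, as claimed.

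\textbf{Where difficulty might hide.} The argument itself is pure bookkeeping and I do not anticipate a real obstacle. What deserves a moment of care is that the slack between our lower bound and $t_2$ is only $t_1+1$; the extra $-2t_1-2$ built into the definition of $t_2$ is there on purpose, to leave room for the later iterative removal of $\Fkr$-copies (where activity in $G_s$, rather than in $G_0$, will be needed). So while the present claim goes through with plenty of room, one must resist the temptation to tighten $t_2$ here, because that slack is being saved for the subsequent stages of the argument.
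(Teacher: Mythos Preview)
Your proof is correct and follows essentially the same approach as the paper: lower-bound the cross-degree $e_{G_0}(v,V_j)$ by starting from $\delta(G)\geqs\dtaT$, subtracting the inside degree ($\leqs t_1$ since $v$ is good) and the at most $(r-3)(\aver+2\sgmn)$ edges into the other classes via Claim~\ref{Fact:balance}, arriving at $\aver-2(r-3)\sgmn-t_1-1=t_2+t_1+1>t_2$. Your explicit remark that cross-edges are untouched in passing from $G$ to $G_0$, so $e_{G_0}(v,V_j)=e_G(v,V_j)$, is a clean way to set up the computation (the paper leaves this implicit), and your observation about the slack $t_1+1$ being reserved for later iterations is exactly the point of the definition of $t_2$.
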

\begin{proof}
Let $v\in{V_i}$ be a good vertex. Then for every $j\neq{i}$, by Claim~\ref{Fact:balance} and Lemma~\ref{Lemma:deltaGgeq},
\begin{eqnarray*}
e_{G_0}(v,V_j)&\geqs&\delta(G)-\deg_{G_0[V_i]}(v)-(r-3)(\upbound)\\
&\ge&\delta(T_{n,r-1})-\deg_{G[V_i]}(v)-(r-3)(\upbound)\\
&\geqs&\aver-2(r-3)\sgmn-t_1-1\\
&\geqs&t_2+t_1.
\end{eqnarray*}
\end{proof}

The following two steps are the procedures to find edge-disjoint copies of $\Fkr$.

\textbf{Step 1.} Suppose that we have gotten $G_s$ for some integer $s\geqs0$. If there exists a vertex $u\in{V_i}$ with $\deg_{G_s[V_i]}(u)\geqs{k}$ (bad vertices are considered first, followed by good vertices), let $v_i^1,...v_i^k$ be $k$ neighbors of $u$ in $V_i$, then for every $j\neq{i}$ we find $k$ good and active vertices $v_j^1,...,v_j^k\in{V_j}$ such that for every $\ell\in [1,k]$, $G_s[u,v_1^\ell,...,v_{r-1}^\ell]$ is a copy of $K_r$, and thus $G_s[\{u\}\cup\{v_j^\ell:\ell\in [1,k], j\in [1,r-1]\}]$ contains a copy of $\Fkr$ centered at $u$. Let $G_{s+1}$ be the updated new graph obtained from $G_s$ by deleting the edges of this $\Fkr$.

\textbf{Step 2.} After Step 1 is completed, denote the remaining graph by $G_a$. Then $\Delta(G_a[V_i])<k$ for each $i\in[1,r-1]$, and $\deg_{G_a[V_i]}(u)=0$ for all bad vertices $u\in{V_i}$ since $\deg_{G_0[V_i]}(u)$ is a multiple of $k$. Suppose that we have get $G_s$ for some $s\geqs{a}$. If there is a matching of order $k$ in $G_s[V_i]$ for some $i\in[1,r-1]$, for example, let $v_1^1w_1^1,...,v_1^kw_1^k\in{V_1}$ be such a matching. We find good and active vertices $u\in{V_2}$ and $v_j^1,...,v_j^k$ in $V_j$ for every $j\in [3,r-1]$ such that $G_s[v_1^\ell,w_1^\ell, u, v_3^\ell,\cdots, v_{r-1}^\ell]$ is a copy of $K_r$ for each $\ell\in [1,k]$ and so $G_s[\{u\}\cup\{v_1^\ell,w_1^\ell,v_j^\ell : j\in [3,r-1],\ell\in[1,k]\}]$ contains a copy of $\Fkr$ centered at $u$. Let $G_{s+1}$ be the updated new graph obtained from $G_s$ by deleting the edges of $\Fkr$. When Step 2 is completed, denote the remaining graph by $G_b$.

Note that after Step 1 and 2 are finished, we have found at least
$$\left\lfloor\frac1k\left(\sum_{i}e_{G_0}(V_i)-\sum_{i}e_{G_b}(V_i))\right)\right\rfloor$$
edge-disjoint $\Fkr$ from $G$ since each copy of $\Fkr$ using exactly $k$ edges from $\cup_{i=1}^{r-1}E(G[V_i])$.
Since $\Delta(G_b[V_i])\le k-1$ and $\nu(G_b[V_i])\le k-1$ for $i=1,2,\cdots, r-1$, by Lemma \ref{NuDelta}, $$e_{G_b}(V_i)\leqs{f(k-1,k-1)}\leqs{k(k-1)},$$
 which implies that $\sum_{i}e_{G_b}(V_i)\leqs{(r-1)k(k-1)}$. Let $p$ be the the number of removed edge-disjoint $\Fkr$ from $G$.

If $m>{m_2(k,r)}$ ($>m_1$), then, since $\sum_{i=1}^{r-1}e_{G_0}(V_i)\geqs{\frac{m}{2}}$, we have
$$p_{\Fkr}(G)\ge p\geqs\frac1k\left[\frac{m}{2}-(r-1)k(k-1)\right]>\frac{m-g(k)}{e(\Fkr)-1}.$$

If $m_1<m\leqs{m_2(k,r)}$, then for every vertex $u\in{V_i}$, $\deg_{G[V_i]}(u)\leqs{m}\leqs{m_2(k,r)}<t_1$ and hence $u$ is good. That is $G$ has no bad vertices and so $G_0=G$. Therefore,
$$p_{\Fkr}(G)\ge p\geqs\frac1k\left[m-(r-1)k(k-1)\right]>\frac{m-g(k)}{e(\Fkr)-1}.$$

Therefore, to complete the proof of Case 1, it remains to prove the following claim.

\begin{claim}\label{CLAIM: 5}
Step 1 and 2 can be successfully iterated.
\end{claim}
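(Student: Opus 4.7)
The plan is to prove Claim~\ref{CLAIM: 5} by induction on the number of $\Fkr$ extracted so far, maintaining two invariants: $(\mathcal I_1)$ the number of bad vertices in $G$ is at most $\bad$, and $(\mathcal I_2)$ at every stage $G_s$, the number of inactive vertices is at most $\inactive$. Invariant $(\mathcal I_1)$ is immediate from the bound on $m$ computed just before the definition of $G_0$. For $(\mathcal I_2)$, each extracted copy of $\Fkr$ removes exactly $k$ within-class edges (in both Step~1 and Step~2), so the total number $P$ of extracted copies satisfies $P \leq m/k < \gm n^2/k$. Each copy contributes $k(r-1)$ peripheral vertex slots, and each peripheral appearance of $w \in V_j$ decreases $e_{G_s}(w, V_{j'})$ by at most $2$ for any fixed $j' \neq j$ (the worst case being $V_{j'}$ equals the center's class). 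By Claim~\ref{Fact:good is active}, a good vertex $w$ is initially active with buffer $e_{G_0}(w, V_{j'}) \geq t_2 + t_1$, so $w$ becomes inactive only after being peripheral in at least $t_1/2$ fans. Pigeonholing on the total peripheral slots $Pk(r-1) \leq (r-1)\gm n^2$ yields at most $32(r-1)kr^3\gm n \leq 8kr^5\gm n = \inactive$ inactive vertices, where the last inequality reduces to $(r-2)^2 \geq 0$.

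Given these invariants, I show each iteration of Step~1 succeeds. Suppose $u \in V_i$ is the chosen center and $v_i^1, \ldots, v_i^k$ are chosen neighbors of $u$ in $V_i$ (which are good, by the construction of $G_0$, whenever $u$ is bad). I select $v_j^\ell$ greedily in the order $\ell = 1, \ldots, k$, and within each $\ell$, $j$ increasing in $[1, r-1] \setminus \{i\}$. Let $C$ be the set of previously chosen vertices to which $v_j^\ell$ must be adjacent (at most $r-1$ of them). Inclusion-exclusion gives
\[
|\{\text{valid candidates in }V_j\}| \geq |V_j| - \sum_{w \in C}\bigl(|V_j| - e_{G_s}(w, V_j)\bigr) - \bad - \inactive - k(r-1).
\]
For $w$ good and active, Claim~\ref{Fact:balance} and the definition of $t_2$ give $|V_j| - e_{G_s}(w, V_j) \leq 2(r-2)\sgmn + 2t_1 + 2$. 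For the bad-center case $w = u$, partition-optimality of the chosen $V_1,\dots,V_{r-1}$ forces $e_G(u, V_j) \geq \deg_{G[V_i]}(u) > t_1$; moreover, since $u$ can be a center at most $\deg_{G_0[V_i]}(u)/k$ times, with each iteration removing exactly $k$ edges from $u$ to $V_j$, we have $e_{G_s}(u, V_j) \geq \deg_{G_0[V_i]}(u)/2 - k$. Substituting $\gm = (40kr^4)^{-2}$ and $t_1 = n/(16kr^3)$, every subtracted term is $o(n/r)$ while the main term $|V_j| \geq \lowbound$ is $\Theta(n/r)$, so the candidate set is non-empty and $v_j^\ell$ can be chosen.

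Step~2 is handled analogously: the matching $\{v_1^\ell w_1^\ell : \ell \in [1,k]\}$ inside $V_1$ plays the role of the within-class edges $uv_i^\ell$ of Step~1, and the center $u \in V_2$ is chosen to be good and active. Such a center exists since, by $(\mathcal I_1)$--$(\mathcal I_2)$, the number of good-and-active vertices in $V_2$ is at least $|V_2| - \bad - \inactive$, vastly exceeding the at most $k(r-1)$ vertices previously used. The same inclusion-exclusion bound as in Step~1 applies to each peripheral $v_j^\ell$.

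The main obstacle of the proof is verifying positivity of the candidate count in the bad-center case of Step~1: when $u$ has just barely exceeded the threshold $t_1$ and one of its cross-class neighborhoods $e_{G_s}(u, V_j)$ is close to the lower bound given by partition-optimality, the candidate bound is dominated by $e_{G_s}(u, V_j)$ itself, so the parameter choices must be sharp. This requires checking the numeric inequality in both regimes $r = 3$ (where cross-edges concentrate into the unique other class, making the inequality automatic) and $r \geq 4$ (where cross-edges can spread unevenly and the sharper accounting combining partition-optimality with the bound on deletions at $u$ is essential).
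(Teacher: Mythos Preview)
Your overall strategy matches the paper's, but there is a genuine gap in the bad-center case of Step~1. Your explicit bound $e_{G_s}(u,V_j)\ge \deg_{G_0[V_i]}(u)/2-k$ uses only partition-optimality ($e_G(u,V_j)\ge \deg_{G[V_i]}(u)$) together with the trivial deletion count. In the worst case $\deg_{G[V_i]}(u)$ is barely above $t_1$, so this yields only $e_{G_s}(u,V_j)\gtrsim t_1/2=n/(32kr^3)$, and hence the subtracted term $|V_j|-e_{G_s}(u,V_j)$ is of order $n/(r-1)$, \emph{not} $o(n/r)$ as you assert. Once even one other vertex is present in $C$, your inclusion--exclusion count is already negative. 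The paper's fix is to bring in the minimum-degree hypothesis $\delta(G)\ge\lfloor\tfrac{r-2}{r-1}n\rfloor$: writing
\[
e_G(u,V_j)\ \ge\ \delta(G)-\deg_{G[V_i]}(u)-(r-3)\Bigl(\tfrac{n}{r-1}+2\sqrt{\gm}n\Bigr)
\]
and averaging with the partition-optimality bound $e_G(u,V_j)\ge\deg_{G[V_i]}(u)$ cancels the unknown $\deg_{G[V_i]}(u)$ and gives $e_G(u,V_j)\ge \tfrac{n}{2(r-1)}-O(\sqrt{\gm}\,n)$, hence $e_{G_s}(u,V_j)\ge \tfrac{n}{4(r-1)}-O(\sqrt{\gm}\,n)$. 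This is what makes the count positive; your ``sharper accounting combining partition-optimality with the bound on deletions at $u$'' is not the right combination --- the minimum-degree input is the missing ingredient.

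There is a secondary gap as well. The within-class neighbours $v_i^\ell$ in Step~1 are good but need not be active, and your case analysis covers only ``$w$ good and active'' and ``$w=u$ bad''. The paper treats this third case separately: once a good vertex $x$ becomes inactive it can only participate in further iterates through its remaining within-class edges (as a center or as some $v_i^\ell$), and each such participation costs at least one within-class edge while deleting at most $k$ cross-edges to any fixed $V_j$ (in fact the ratio is $1{:}1$), yielding $e_{G_s}(x,V_j)\ge t_2-t_1-k$. Your invariant $(\mathcal I_2)$ has the same omission: good vertices can also appear as centers (in Step~1 after the bad vertices are exhausted, and freely in Step~2), and a single center appearance deletes up to $2k$ cross-edges to a fixed class, so the peripheral-slot pigeonhole alone does not bound the number of inactive good vertices. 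The paper sidesteps this by simply dividing the total number of deleted edges $s\cdot e(\Fkr)$ by the buffer $t_1$.
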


To proof the above claim, we first estimate the number of good and inactive vertices in each iteration of Step 1 or 2.
\begin{claim}\label{CORO:good inactive vetices in Gs}
Let $G_s\subset{G_0}$ be a subgraph at some point of the iteration in Step 1 or Step 2. Then the number of good inactive vertices in $G_s$ is at most $\inactive$.
\end{claim}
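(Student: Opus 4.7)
The strategy is a double-counting argument. First, by Claim~\ref{Fact:good is active}, every good $v \in V_i$ satisfies $e_{G_0}(v, V_j) \geqs t_2 + t_1$ for every $j \neq i$. So if $v$ is also inactive in $G_s$, I can choose $j(v) \neq i$ with $e_{G_s}(v, V_{j(v)}) \leqs t_2$; subtracting, at least $t_1$ of the edges from $v$ to $V_{j(v)}$ must have been removed in passing from $G_0$ to $G_s$.

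Next I bound the total number of cross edges deleted during the iterations. Inspecting Steps~1 and~2, each deleted copy of $\Fkr$ uses exactly $k$ intra-class edges---the $k$ edges from the center to $v_i^1,\ldots,v_i^k$ in Step~1, or the matching $v_1^1 w_1^1,\ldots,v_1^k w_1^k$ in Step~2---together with the remaining $k(\binom{r}{2}-1)$ cross edges. Since all deleted intra-class edges come from $G_0$ and $\sum_i e_{G_0}(V_i) \leqs m < \gm n^2$, the number $p$ of $\Fkr$-copies removed so far satisfies $pk \leqs m < \gm n^2$; hence the total number of deleted cross edges is at most $pk(\binom{r}{2}-1) < \frac{r^2}{2}\gm n^2$.

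Combining, write $N$ for the number of good inactive vertices in $G_s$ and equip each with a witness class $j(v)$. For each deleted cross edge $xy$, orient it toward a good inactive endpoint whose witness class contains the other endpoint (breaking ties by a fixed vertex order). Each good inactive vertex $v$ then receives at least $t_1$ oriented edges (all of its deleted edges to $V_{j(v)}$, after accounting for ties), while each deleted cross edge is counted once, so
$$t_1 N \leqs pk\left(\binom{r}{2}-1\right) < \frac{r^2}{2}\gm n^2,$$
and substituting $t_1 = n/(16kr^3)$ yields $N < 8kr^5\gm n = \inactive$, as desired.

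The main obstacle is the orientation step: a naive unoriented double count gives $t_1 N \leqs 2 \cdot (\text{cross edges deleted})$, which only yields $N \leqs 16 k r^5 \gm n$---twice the claimed bound. Assigning each deleted cross edge to a unique good-inactive witness endpoint (and verifying that $v$'s credit remains at least $t_1$ despite tie-breaking against its $V_{j(v)}$-neighbors) is where the constant $\inactive$ is actually earned. Everything else---the per-copy edge count $e(\Fkr) = k\binom{r}{2}$ and the budget $pk \leqs m < \gm n^2$ inherited from Lemma~\ref{LEM: partition}---is routine bookkeeping.
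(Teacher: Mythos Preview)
Your overall approach---bound the total number of removed edges by $e(\Fkr)\cdot s$ (using $ks\leqs m<\gmnn$) and then divide by the per-vertex deficit $t_1$ coming from Claim~\ref{Fact:good is active}---is exactly what the paper does. In fact the paper's proof is the short ``naive'' version: it writes $N\leqs \dfrac{e(\Fkr)\cdot s}{t_1}<\inactive$ in one line and does \emph{not} introduce any orientation at all; the factor of $2$ you worry about is simply not mentioned.

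Your proposed fix for that factor, however, does not work as stated. After tie-breaking by a fixed vertex order, a good inactive vertex $v\in V_i$ with witness $j(v)$ can lose all of its credit: nothing forbids every one of the $\geqs t_1$ deleted edges from $v$ to $V_{j(v)}$ from landing on good inactive vertices $w\in V_{j(v)}$ whose witness is $i$ and whose index is smaller than $v$'s. Each such edge is then oriented toward $w$, not $v$, and $v$ receives zero oriented edges. So the sentence ``verifying that $v$'s credit remains at least $t_1$ despite tie-breaking'' is precisely the missing step, and in general it is false; the orientation scheme as described does not yield $t_1N\leqs (\text{deleted cross edges})$.

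The honest inequality is $t_1N\leqs 2\,e(\Fkr)\cdot s<r(r-1)\gmnn$, giving $N<16kr^4(r-1)\gm n$. This is at most twice the claimed $\inactive$, and everywhere the bound is invoked later (Claim~\ref{CLAIM: 5}) there is ample slack against $n/r$, so nothing downstream is affected. If you want the exact constant $\inactive$, you would need a genuinely different argument; neither your orientation nor the paper's one-line division supplies it.
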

\begin{proof}
Since in each iteration of Step 1 or Step 2, the number of removed edges with both endpoints in $V_i$ is exactly $k$, $s\leqs{m/k}<\gmnn/k$.   So the total number of deleted edges from $G_0$ is at most $e(\Fkr)\cdot s<\frac{r(r-1)}2\gmnn$. By Claim \ref{Fact:good is active}, for every good vertex $u\in{V_i}$, $u$ is active in $G_0$ and $e_{G_0}(v,V_j)\geqs t_2+t_1$. Thus the number of good vertices that are inactive in $G_s$ is at most
$\frac{e(\Fkr)\cdot s}{t_1}<8kr^5\gm n.$
\end{proof}

\noindent {\sl{Proof of Claim~\ref{CLAIM: 5}:}} Let $G_s$ be the graph obtained at some point of the iteration in Step 1. For any fixed $x\in{V_i}$ and every $j\neq{i}$.  If $x$ is bad and is involved in a previous iterate, then $x$ is the center of a copy of $\Fkr$ and hence the number of removed edges that $x$ sends to $V_j$ is $k$. Since $x$ involves at most $\frac{\deg_{G_0[V_i]}(x)}k$ iterates,
\begin{eqnarray*}
e_{G_s}(x, V_j)&\geqs& e_{G_0}(x, V_j)-k\cdot \frac{\deg_{G_0[V_i]}(x)}k\\
&=&e_{G}(x, V_j)-k\lc{\frac{\deg_{G[V_i]}(x)}{2k}}\rc\\
&\geqs&\frac{ e_G(x,V_j)}2-k\\
&\geqs&\frac{n}{4(r-1)}-\frac{r-3}{2}\sgmn-\frac14-k\\
&\ge & \frac {t_2}4+\frac {t_1}2+\frac14-k,
\end{eqnarray*}
the third inequality holds since
\begin{equation}\label{EQU: e1}
e_G(x, V_j)\ge \deg_{G[V_i]}(x)
\end{equation}
((\ref{EQU: e1}) holds because of the maximality of $\ecrossG$); the forth inequality holds since
\begin{eqnarray}\label{EQN: e1}
e_G(x, V_j)&=&\deg_G(x)-\deg_{G[V_i]}(x)-\sum_{\ell\not=i,j}e_G(x, V_\ell)\\
          &\ge& \delta(G)-\deg_{G[V_i]}(x)-(r-3)(\upbound)\nonumber
\end{eqnarray}
and ((\ref{EQU: e1}) and (\ref{EQN: e1}) implies that)
\begin{eqnarray*}
e_G(x, V_j)
&\geqs&\frac12[\delta(G)-(r-3)(\upbound)] \\
&\ge& \frac{n}{2(r-1)}-(r-3)\sgmn-\frac  12.
\end{eqnarray*}

If $x$ is good and active in $G_s$, then $e_{G_s}(x,V_j) \geq t_2$. Now suppose $x$ is good but inactive in $G_s$. Then $x$ becomes inactive in a previous iterate. If $x$ is involved in a succeeding iterate, then $x$ is chosen to be the center of a copy of $\Fkr$. So the number of edges that $x$ sends to $V_j$ is $k$ and $x$ is involved in at most $\frac{\deg_{G_0[V_i]}(x)}k$ succeeding iterates. Hence, by inequality~(\ref{EQN: e1}) and $\deg_{G[V_i]}(x)\le t_1$,
\begin{eqnarray*}
e_{G_s}(x,V_j)& \geqs& t_2-k-\deg_{G_0[V_i]}(x)\\
              &\ge & t_2-t_1-k\\
(&\ge & \frac {t_2}4+\frac {t_1}2+\frac14-k\ \  \mbox {when $n$ is sufficiently large}).
\end{eqnarray*}

Wlog, let $u$ be in ${V_1}$ and $v_1^1,...,v_1^k\in{V_1}$ are $k$ (good) neighbors of $u$. Suppose that for some $\ell\in [1, r-2]$, we have found good and active vertices $v_i^1,...,v_i^k\in{V_i}$ \ ($1\leqs{i}\leqs{l}$) such that for every $j\in [1,k]$, $G_s[u,v_1^j,...,v_\ell^j]$ is a copy of $K_{\ell+1}$. Then, for every $j\in [1,k]$, by Claim~\ref{CORO:good inactive vetices in Gs}, the number of good and active common neighbors of $u,v_1^j,...,v_\ell^j$ in $V_{\ell+1}$ of $G_s$, denoted by $L_{\ell+1}(u,v_1^j,...,v_\ell^j)$, is
\begin{eqnarray*}
|L_{\ell+1}(u,v_1^j,...,v_\ell^j)|&\ge&e_{G_s}(u, V_{\ell+1})+\sum_{i=1}^\ell{e_{G_s}(v_{i}^j, V_{\ell+1})}-\ell|V_{\ell+1}|\\
                                  &&-\bad-\inactive\\
                   & \ge & \frac {t_2}4+\frac {t_1}2+\frac14-k+\ell(t_2-t_1-k)\\
                   & &-\ell(\upbound)-\bad-\inactive\\
&\geqs&\frac{n}{4(r-1)}-\frac{13n}{40kr^2}-(k+2)r-k-\frac 14\\
&\geqs&\frac{n}{4r}\ \  \mbox{(when $n$ is sufficiently large)}.
\end{eqnarray*}
Therefore, Step 1 can be performed successfully to find a copy of $\Fkr$ centered at $u$.

Now, let $G_s$ be the graph obtained at some point of the iteration in Step 2 for some $s\geqs{a}$. Let $x\in{V_i}$ be a good vertex and $j\neq{i}$. Then, after $x$ becomes inactive in previous iterates, the total number of removed edges that $x$ sends to $V_j$ are at most $\deg_{G_a[V_i]}(x)$. Hence
\begin{eqnarray*}
e_{G_s}(x, V_j)&\geqs&t_2-2k-\deg_{G_a[V_i]}(x) \\
&\geqs&t_2-2k-\deg_{G[V_i]}(x)\\
&\geqs&t_2-2k-t_1.
\end{eqnarray*}

Wlog, suppose that $v_1^1w_1^1,...,v_1^kw_1^k$ is a matching in $V_1$. Then $v_1^1,w_1^1,...,v_1^k,w_1^k$ are good. Let $X=\{v_1^1,w_1^1,...,v_1^k,w_1^k\}$. By CLaim~\ref{CORO:good inactive vetices in Gs}, the number of common active good neighbors of $X$ in $V_2$ of $G_s$, denoted by $L_2(X)$, is at least
\begin{eqnarray*}
|L_2(X)|&= &\sum_{x\in X}e_{G_s}(x,V_2)-(2k-1)|V_2|-\bad-\inactive\\
&\geqs&2k(t_2-2k-t_1)-(2k-1)(\upbound)-\bad-\inactive\\
&=&\frac{n}{r-1}-2[2k(r-2)-1]\sgmn-6kt_1-4k(k+1)-\bad-\inactive\\
&\geqs&\frac{n}{r-1}-\frac{n}{2r^2}-4k(k+1)\\
&\geq& \frac nr \ \ (\mbox{when $n$ is sufficiently large}).
\end{eqnarray*}
Choose such a common neighbor $u$ of $X$ in $V_2$.

Now, suppose that we have found active and good vertices $v_i^1,...,v_i^k$, for $i\in [3,\ell]$ ($2\leqs{\ell}\leqs{r-2}$) such that for every $j\in[1,k]$, $G_s[v_1^j,w_1^j,u,v_3^j,...,v_\ell^j]$ is a copy of $K_{\ell+1}$. Let $Y_j=\{v_1^j,w_1^j,u,v_3^j,...,v_\ell^j\}$ for $j\in [1,k]$ and denote the the common active and good neighbors of $Y_j$ in $V_{\ell+1}$ of $G_s$ by $L_{\ell+1}(Y_{j})$. Then the same reason as before,
\begin{eqnarray*}
|L_{\ell+1}(Y_j)|&\ge & \sum_{x\in Y_j}e_{G_s}(x,V_{\ell+1})-\ell\cdot|V_{\ell+1}|-\bad-\inactive\\
            &\geqs&(\ell+1)(t_2-2k-t_1)-\ell(\upbound)-\bad-\inactive\\
            &=&\aver-2[(r-2)(\ell+1)-1]\sgmn-3(\ell+1)t_1-\bad\\
            & & -\inactive-2(\ell+1)(k+1)\\
&\geqs&\aver-2((r-2)(r-1)-1)\sgmn-3(r-1)t_1-\bad\\
&& -\inactive-2(r-1)(k+1)\\
&\geqs&\frac{n}{r-1}-\frac{3n}{8kr^2}-2(r-1)(k+1)\\
&\ge & \frac{n}r\ \ (\mbox{when $n$ is sufficiently large}).
\end{eqnarray*}
Hence $k$ different active and good common neighbors $v_{\ell+1}^1, \cdots, v_{\ell+1}^k$ with $v_{\ell+1}^j\in L_{\ell+1}(Y_{j})$ for $j\in [1,k]$ do exist and therefore Step 2 can be successfully iterated.

\vspace{5pt}

\begin{case} $m\leqs{m_1}.$
\end{case}
For every $i\in [1,r-1]$, let $B_i\subset{V_i}$ be the set of isolated vertices of $G[V_i]$ and $A_i=V_i\setminus{B_i}$. Then $|A_i|\leqs{2m}\leqs{2m_1}$. Since $|V_i|\geqs\lowbound>2m_1\ge |A_i|$, we have $B_i\neq\emptyset.$
Note that, for $u\in{B_i}$, $\deg_{G[V_i]}(u)=0$. By $\dtaT\leqs\deg_G(u)\leqs{n-|V_i|}$, we have $|V_i|\leqs{\lc{\aver}\rc}$.
Together with $|V_1|+...+|V_{r-1}|=n$, we have the following claim.

\begin{claim}\label{Fact:r-2 balance} For every  $i\in[1,r-1]$, $\lc{\aver}\rc-(r-2)\leqs|V_i|\leqs{\lc{\aver}\rc}$. Particularly, if $(r-1)|n$, then $|V_i|=\aver$ for each $i\in [1,r-1]$.
\end{claim}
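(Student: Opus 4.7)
The plan is to extract both bounds directly from the two structural facts already established in Case 2: that $\delta(G)\ge \dtaT$ (by the reduction of Lemma~\ref{Lemma:deltaGgeq}), and that each $B_i$ is nonempty (because $|A_i|\le 2m\le 2m_1$ is much smaller than $|V_i|\ge \lowbound$ by Claim~\ref{Fact:balance} and the choice of $n_1$).

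First I would prove the upper bound $|V_i|\le\lc\aver\rc$. Fix $i$ and pick any $u\in B_i$, which exists by the remark above. Since $u$ has no neighbor inside $V_i$, all of its neighbors lie in $V(G)\setminus V_i$, so $\deg_G(u)\le n-|V_i|$. Combining with $\delta(G)\ge\lf(r-2)n/(r-1)\rf$ gives
\[
|V_i|\le n-\lf\tfrac{r-2}{r-1}n\rf=\lc\tfrac{n}{r-1}\rc,
\]
using the elementary identity $\lf N-x\rf=N-\lc x\rc$ for integer $N$ and real $x$.

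Next I would derive the lower bound by using $\sum_{j=1}^{r-1}|V_j|=n$. Applying the upper bound to each of the other $r-2$ parts yields
\[
|V_i|=n-\sum_{j\neq i}|V_j|\ \ge\ n-(r-2)\lc\tfrac{n}{r-1}\rc.
\]
A short case analysis, writing $n=q(r-1)+s$ with $0\le s\le r-2$, shows that the right-hand side equals $n/(r-1)$ when $s=0$, and equals $\lc n/(r-1)\rc-(r-1-s)\ge\lc n/(r-1)\rc-(r-2)$ when $s\ge 1$. This is exactly the desired inequality, and in the divisible case it collapses with the upper bound to give $|V_i|=\aver$.

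This step is essentially bookkeeping rather than a genuine obstacle. The only nontrivial ingredient is the nonemptiness of $B_i$, which is the pivot that lets Case 2 behave so rigidly: once one isolated-in-$G[V_i]$ vertex exists, the minimum-degree condition forces $V_i$ to be no larger than a Tur\'an part, and balance then forces each $V_i$ to be essentially of Tur\'an size. This tight size control is exactly what the later portion of the Case 2 analysis will require when comparing $G$ to the extremal graph $G_{n,k,r}$.
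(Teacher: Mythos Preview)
Your proposal is correct and follows essentially the same route as the paper: the upper bound comes from picking $u\in B_i$ and using $\delta(G)\ge\lf(r-2)n/(r-1)\rf$ together with $\deg_G(u)\le n-|V_i|$, and the lower bound then follows from $\sum_j|V_j|=n$. The only difference is that you write out the residue-class case analysis for the lower bound explicitly, whereas the paper leaves that arithmetic implicit.
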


Since $e(G)\geqs\phi(G,\Fkr)\geqs{\ex(n,\Fkr)}$, there exists some integer $s\geqs0$, such that
$$s(e(\Fkr)-1)\leqs{e(G)-\ex(n,\Fkr)}<(s+1)(e(\Fkr)-1).$$
Note that $e(G)-\ex(n,\Fkr)\leqs{m-g(k)}\leqs{m_1-g(k)}$. Hence $s\leqs\frac{m_1-g(k)}{e(\Fkr)-1}.$
Furthermore, we have a simple and useful upper bound for $s$ as follows.

\begin{claim}\label{Fact:upbound for s}
 $s<\frac{(r-2)(k-1)}2+1.$ Particularly, $s<e(\Fkr)$.
\end{claim}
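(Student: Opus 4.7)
The plan is to chain inequalities down to a purely arithmetic statement involving $m_1$, $g(k)$, and $e(F_{k,r})$, and then verify the final numeric inequality by a direct substitution. By the definition of $s$, we have
\[
s(e(\Fkr)-1)\leqs e(G)-\ex(n,\Fkr).
\]
Combining this with the bound $e(G)-\ex(n,\Fkr)\leqs m-g(k)$ already established just before Claim~\ref{Fact:balance}, and with the Case~2 hypothesis $m\leqs m_1$, we obtain
\[
s\leqs\frac{m_1-g(k)}{e(\Fkr)-1}.
\]

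Next I would plug in the explicit formula
\[
m_1=\frac{(e(\Fkr)-1)(r-1)k(k-1)-kg(k)}{e(\Fkr)-1-k}.
\]
A short calculation (multiplying numerator and denominator through by $e(\Fkr)-1-k$) gives
\[
\frac{m_1-g(k)}{e(\Fkr)-1}=\frac{(r-1)k(k-1)-g(k)}{e(\Fkr)-1-k},
\]
so the claim reduces to the purely arithmetic inequality
\[
\frac{(r-1)k(k-1)-g(k)}{e(\Fkr)-1-k}<\frac{(r-2)(k-1)}{2}+1.
\]

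To verify this, I would use $e(\Fkr)=k\binom{r}{2}=\tfrac{kr(r-1)}{2}$ and split into the two cases $k$ odd ($g(k)=k^2-k$, so the numerator becomes $(r-2)k(k-1)$) and $k$ even ($g(k)=k^2-\tfrac32 k$, so the numerator becomes $(r-2)k(k-1)+\tfrac{k}{2}$). In either case cross-multiplying and simplifying reduces to an inequality of the form $Ak(k-1)(r-2)+Bk\leqs \tfrac12(r-2)(k-1)\cdot \tfrac{kr(r-1)}{2}+O(k,r)$, which is comfortably true for $k\geqs 2$ and $r\geqs 3$. Finally, the ``Particularly'' statement $s<e(\Fkr)$ follows immediately from $\tfrac{(r-2)(k-1)}{2}+1\leqs \tfrac{kr(r-1)}{2}=e(\Fkr)$, which holds for all $k\geqs 2$, $r\geqs 3$.

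The only real obstacle is the unilluminating algebraic verification of the last inequality, which must be done separately in the parity cases because $g(k)$ has different formulas; however, no structural input from Case~2 beyond $m\leqs m_1$ is needed, so the argument is essentially bookkeeping driven by the definition of $m_1$.
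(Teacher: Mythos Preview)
Your proposal is correct and follows essentially the same route as the paper: both start from $s\leqs\frac{m_1-g(k)}{e(\Fkr)-1}$, simplify via the definition of $m_1$ to $\frac{(r-1)k(k-1)-g(k)}{e(\Fkr)-1-k}$, and then verify the resulting arithmetic inequality against $\frac{(r-2)(k-1)}{2}+1$. The only cosmetic difference is that the paper avoids your parity split by using the uniform bound $g(k)\geqs k(k-\tfrac32)$, reducing everything to the single inequality $(r-2)(k-1)(e-3k-1)+2e-3k-2>0$ with $e=e(\Fkr)$.
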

\begin{proof} For convenience, write $e=e(\Fkr)=\frac{kr(r-1)}2$  and $g=g(k)$. Since $s\leqs\frac{m_1-g}{e-1}$, it's sufficient to prove $1+\frac{(r-2)(k-1)}2>\frac{m_1-g}{e-1}$. Note that $m_1=\frac{(e-1)(r-1)k(k-1)-kg}{e-k-1}$. So
\begin{eqnarray*}
&&\frac{(r-2)(k-1)}2+1>\frac{m_1-g}{e-1}\\&\Leftrightarrow &\frac{(r-2)(k-1)}{2}+1>\frac{(r-1)k(k-1)-g}{e-k-1}\\
                                      &\Leftrightarrow & \frac{(r-2)(k-1)}{2}+1>\frac{(r-1)k(k-1)-k(k-3/2)}{e-k-1} \ (\mbox{since $g\ge k(k-3/2)$})\\
                                      &\Leftrightarrow & \frac{(r-2)(k-1)}{2}+1>\frac{(r-2)k(k-1)+k/2}{e-k-1}\\
                                      &\Leftrightarrow & [(r-2)(k-1)+2](e-k-1)-2(r-2)k(k-1)-k>0\\
                                      &\Leftrightarrow & (r-2)(k-1)(e-3k-1)+2e-3k-2>0
\end{eqnarray*}
Note that $e=\frac{kr(r-1)}2\ge 3k$ ($r\ge 3$) and the equality holds if and only if $r=3$. It is an easy task to check that the above inequality always holds for $k\ge 1$.

Clearly, $\frac{kr(r-1)}2>\frac{(r-2)(k-1)}{2}+1$ when $r\ge 3$ and $k\ge 1$. So $s<e(\Fkr)$.
\end{proof}

If we can find $s+1$ edge-disjoint copies of $\Fkr$ in $G$, then we have $$\phi(G,\Fkr)\leqs{e(G)-(s+1)(e(\Fkr)-1)}<\ex(n,\Fkr),$$
a contradiction with the assumption that $\phi(G,\Fkr)\ge \ex(n,\Fkr)$. So to complete the proof of Case 2 (and the proof of Theorem \ref{THM: Main}), it remains to prove the following claim.

\begin{claim}\label{s+1}
 $G$ contains $s+1$ edge-disjoint copies of $\Fkr$.
\end{claim}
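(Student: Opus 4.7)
My plan is to construct the $s+1$ edge-disjoint copies of $\Fkr$ iteratively, one at a time, adapting the greedy strategy of Case~1 to the bounded setting of Case~2. Let $G_t$ denote the graph obtained from $G$ after removing the edges of $t$ previously-constructed fans, for $t=0,1,\dots,s$. Since $s+1\leqs e(\Fkr)$ by Claim~\ref{Fact:upbound for s} depends only on $k$ and $r$, the total number of edges removed across all iterations is a constant in $k,r$, so the Case~2 structure of $G$---the near-balance of $|V_i|$ from Claim~\ref{Fact:r-2 balance}, the minimum-degree bound $\delta(G)\geqs \dtaT$, and the fact that $|B_i|\geqs |V_i|-2m_1$---persists in each $G_t$ up to negligible perturbations.

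For each iteration I would find a new fan in $G_t$ using the two templates of Case~1. If some $u\in V_i$ satisfies $\deg_{G_t[V_i]}(u)\geqs k$, I center a fan at $u$ with $k$ of its inside neighbors as spokes and extend each spoke to a copy of $K_r$ using good vertices from the $B_j$'s ($j\neq i$). Otherwise, if some $V_i$ contains a matching of size $k$, I build the fan with center $u\in B_j$ (for some $j\neq i$) chosen adjacent to all $2k$ matching endpoints, and complete each $K_r$ via good vertices in the remaining parts. In both templates the abundance of $B_l$-vertices with crossing degree at least $\delta(G)-O_{k,r}(1)$ guarantees that the extensions exist and can be made edge-disjoint across the $s+1$ iterations.

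I expect the main obstacle to be the case in which both templates fail in some $G_t$: then $\Delta(G_t[V_i])<k$ and $\nu(G_t[V_i])<k$ for every $i$, so Lemma~\ref{NuDelta} yields $\sum_i e(G_t[V_i])\leqs (r-1)f(k-1,k-1)\leqs (r-1)k(k-1)$; since the first $t$ fans have consumed $kt$ inside edges, this forces $m\leqs kt+(r-1)k(k-1)$, which combined with $t\leqs s\leqs (m-g(k))/(e(\Fkr)-1)$ and the Case~2 hypothesis $m\leqs m_1$ is only consistent in a narrow regime. To break such an impasse, I plan to augment the two standard templates with a third, \emph{mixed} template in which the $k$ inside edges of a single fan are drawn from several $V_i$'s simultaneously, with the center placed in a $B_l$-vertex of a part $V_l$ whose inside edges are not used by the fan. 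Such a mixed fan is available whenever the residual inside edges span multiple parts; combined with the fact that $G$ is not extremal (so its inside structure cannot coincide with the concentrated arrangement of $G_{n,k,r}$), a careful count using all three templates should yield $s+1$ edge-disjoint fans and complete the proof.
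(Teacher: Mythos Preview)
Your plan has a genuine gap precisely at the point you flag as ``the main obstacle.'' When both templates fail in $G_t$, your own computation gives $m\leqs kt+(r-1)k(k-1)$; plugging in $t\leqs s\leqs (m-g(k))/(e(\Fkr)-1)$ and solving for $m$ yields exactly $m\leqs m_1$. That is not a narrow regime---it is the \emph{entire} Case~2 hypothesis, so no contradiction arises and the greedy process can legitimately stall for every value of $m$ under consideration. Your proposed ``mixed template'' does not rescue this: any copy of $K_r$ on vertices partitioned into $r-1$ classes must contain at least one inside edge, so every fan, however constructed, consumes at least $k$ inside edges, and the bound $p_{\Fkr}(G)\leqs m/k$ on fans built this way still falls short of $s+1$ for small $m$. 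Finally, the non-extremality of $G$ enters your sketch only as a vague remark, whereas it is in fact the crux of the difficulty when $m$ is close to $g(k)$.

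The paper's argument is structurally different. It first observes that $e(G)-(s-1)e(\Fkr)>\ex(n,\Fkr)$, so $s$ edge-disjoint fans exist in $G$ by the extremal definition alone; the whole task is then to locate \emph{one more} fan in the graph $G'$ obtained after deleting $s$ carefully chosen fans. The key device is a vertex-deletion trick: if some $u$ satisfies $\deg_{G'}(u)\leqs\dtaT-s$, then $e(G'-u)\geqs\ex(n-1,\Fkr)$, and provided $G'-u$ is not itself extremal it contains another fan. The work is therefore to \emph{choose} the first $s$ fans so that such a low-degree vertex $u$ is created, and this is done via a three-way case analysis on the structure of high-degree vertices inside the $A_i$'s (Cases (I), (II), (III)), using Claim~\ref{Claim:(r-1)complete} to embed fans with a prescribed common vertex and invoking Lemma~\ref{PROP: ex-p1} to rule out extremality of $G'-u$. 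This mechanism has no counterpart in your proposal, and you would need something of comparable strength to close the gap.
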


Before we prove Claim \ref{s+1}, we need an auxiliary claim.

\begin{claim}\label{Claim:(r-1)complete}
For every $i\in [1,r-1]$, and any subset $A^i\subset{A_i}$, if $A^i\neq\emptyset$ and $|A^i|\leqs(k+1)(s+1)$, then for every $j\neq{i}$, there exists a subset $B^j\subset{B_j}$ with $|B^j|=(k+1)(s+1)$ such that $G$ contains a complete $(r-1)$-partite subgraph of $G$ with partitions $A^i$, $B^j$ ($j\in [1, r-1]$ and $j\neq{i}$).
\end{claim}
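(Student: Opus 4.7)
The plan is to construct the sets $B^j$ one index $j$ at a time, greedily avoiding at each step the (few) vertices of $B_j$ that fail to dominate the already-committed vertices. The driving observation is that a vertex in $B_\ell$, having no internal edges, must be adjacent to almost every vertex outside $V_\ell$, and the same is \emph{almost} true for vertices of $A^i$ since the total number of internal edges is bounded by $m \leq m_1$.

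First, I would establish two uniform non-neighbor bounds. If $v \in B_\ell$ then all neighbors of $v$ lie in $V \setminus V_\ell$, so combining $\deg(v) \geq \lfloor (r-2)n/(r-1) \rfloor$ with the near-balance $|V_\ell| \geq \lceil n/(r-1)\rceil - (r-2)$ from Claim~\ref{Fact:r-2 balance} yields
$$|V \setminus V_\ell| - e(v, V \setminus V_\ell) \leqs r - 1.$$
For $u \in A_i$ the internal degree $\deg_{G[V_i]}(u)$ is at most $2m \leqs 2m_1$ since $e(G[V_i]) \leqs m$, so the same computation gives at most $2m_1 + r - 1$ non-neighbors of $u$ in $V \setminus V_i$.

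Next, I would enumerate $[1,r-1]\setminus\{i\}$ in some order $j_1,\ldots,j_{r-2}$ and pick $B^{j_\ell}$ inductively. Call $x \in B_{j_\ell}$ \emph{bad} if $x$ is non-adjacent to some vertex of $A^i \cup B^{j_1} \cup \cdots \cup B^{j_{\ell-1}}$. Summing the two bounds above over all previously committed vertices, the number of bad vertices in $B_{j_\ell}$ is at most
$$|A^i|(2m_1 + r - 1) + \sum_{\ell' < \ell} |B^{j_{\ell'}}|(r-1) \leqs (k+1)(s+1)\bigl[2m_1 + (r-1)^2\bigr].$$
Meanwhile Claim~\ref{Fact:r-2 balance} and $|A_{j_\ell}| \leqs 2m_1$ give $|B_{j_\ell}| \geqs n/(r-1) - (r-2) - 2m_1$. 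Any $(k+1)(s+1)$ good vertices of $B_{j_\ell}$ can be taken as $B^{j_\ell}$, and by construction the resulting sets together with $A^i$ form a complete $(r-1)$-partite subgraph of $G$.

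Finally, I would verify that this selection is always possible, i.e.\ that the good vertices outnumber $(k+1)(s+1)$. Plugging in $s + 1 < e(\Fkr) = kr(r-1)/2$ from Claim~\ref{Fact:upbound for s}, this reduces to the inequality
$$\frac{n}{r-1} \geqs (k+1)(s+1)\bigl[2m_1 + (r-1)^2 + 1\bigr] + 2m_1 + (r-2),$$
which is comfortably implied by the defining bound $n > n_1 \geqs 16(k+1)^3r^8 + 6(k+1)^2r^3 m_1$. The only non-routine ingredients are the two almost-universal-adjacency bounds; everything else is bookkeeping matched to the choice of $n_1$, so there is no real obstacle beyond arithmetic care.
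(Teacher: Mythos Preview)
Your argument is correct and follows essentially the same greedy scheme as the paper: build the sets $B^{j}$ one class at a time, using the minimum-degree condition together with Claim~\ref{Fact:r-2 balance} and the bound $|A_j|\leqs 2m_1$ to guarantee enough common neighbours at each step. The only cosmetic difference is that you count non-neighbours of the committed vertices and sum, whereas the paper counts neighbours in $V_{\ell+1}$ and applies inclusion--exclusion; these are complementary formulations of the same estimate (and your bound $\deg_{G[V_i]}(u)\leqs 2m$ is looser than the actual $\leqs m$, but harmless).
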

\begin{proof} Wlog, let $A^1\subset{A_1}$ and $|A^1|=(k+1)(s+1)$. Suppose that we have found $B^2,...,B^\ell$, $B^j\subset{B_j}$ with $|B^j|=(k+1)(s+1)$, $j\in [2,\ell]$, $\ell\in [1,r-2]$, such that $G$ contains a complete $\ell$-partite subgraph with partitions $A^1$ and $B^2,...,B^\ell$.
Note that $|V_i|\le \lceil\frac n{r-1}\rceil$ ($i\in [1, r-1]$) by Claim~\ref{Fact:r-2 balance}. Hence for any $u\in{A^1}$ and $v\in{B^j}$,
\begin{eqnarray*}
e_G(u, V_{\ell+1})&\geqs&\delta(G)-(r-3)(\aver+1)-|A_1|\\
&\geqs&\aver-(r-2)-2m_1,\\
e_G(v,V_{\ell+1})&\geqs&\delta(G)-(r-3)(\aver+1)\\
&\geqs&\aver-(r-2)-2m_1.
\end{eqnarray*}

Let $Z^{\ell}=A^1\cup B^2\cup...\cup B^\ell$ and let $L_{\ell+1}(Z^\ell)$ be the set of common neighbors of $Z^{\ell}$ in $B_{\ell+1}$. Then
\begin{eqnarray*}
|L_{\ell+1}(Z^\ell)| &\ge&\sum_{v\in{Z^{\ell}}} e_G(v, V_{\ell+1})-[(k+1)(s+1)\ell-1]|V_{\ell+1}|-|A_{\ell+1}|\\
&\geqs&(k+1)(s+1)\ell[\aver-(r-2)-2m_1]\\
&&-[(k+1)(s+1)\ell-1](\aver+1)-2m_1\\
&=&\aver-(2m_1+r-1)(k+1)(s+1)\ell-2m_1+1\\
&\geqs&\frac{n}{r}\ \ (\mbox{when $n$ is sufficiently large}).
\end{eqnarray*}
Hence we always can choose $B^{\ell+1}\subset B_{\ell+1}$ for $\ell\in[1, r-2]$ and so the result follows.
\end{proof}

\vspace{5pt}
\noindent {\sl{Proof of Claim~\ref{s+1}:}}
By Claim~\ref{Fact:upbound for s},  $s<e(\Fkr)$. By $e(G)\geqs{\ex(n,\Fkr)+s(e(\Fkr)-1)}$, we have
$$e(G)-(s-1)e(\Fkr)\geqs{\ex(n,\Fkr)+e(\Fkr)-s>\ex(n,\Fkr)}.$$
So, $G$ contains $s$ edge-disjoint copies of $\Fkr$. Let $G'$ be a subgraph of $G$ by removing the edges of $s$ copies of $\Fkr$. If there is a vertex $u$ in $G'$ with
$\deg_{G'}(u)\leqs\dtaT-s,$
then
\begin{eqnarray*}
e(G'-u)&=&e(G')-\deg_{G'}(u)\\
&=&e(G)-s\cdot e(\Fkr)-\deg_{G'}(u)\\
&\geqs&\ex(n,\Fkr)-\dtaT\\
&=&\ex(n-1,\Fkr).
\end{eqnarray*}
 If the equality does not hold or the equality holds but we can show that $G'-u$ is not an extremal graph for $\Fkr$, then $G'-u$ contains a copy of $\Fkr$ and we are done.



In the following, we show that how to remove the edges of $s$ copies of $\Fkr$ in $G$ to get our desired subgraph $G'$ and vertex $u$ according to three cases.

\textbf{(I).} If there are $s$ vertices $u_1,...,u_s\in{A_1\cup...\cup A_{r-1}}$ such that $\deg_{G[A_i]}(u_j)\geqs{k+s-1}$ for $u_j\in{A_i}$. Let $q=\max\{|\{u_1,...,u_s\}\cap{A_i}| :  i\in [1,r-1]\}$. Then $s\geqs{q}\geqs\lc{\frac{s}{r-1}}\rc$. Wlog, assume that  $u_1,...,u_q\in{A_1}$. For each $j\in [1,q]$, choose $k$ neighbors of $u_j$ in $A_1\setminus\{u_1,...,u_q\}$, say $v_{j1},...,v_{jk}$, this is possible since there are at least $k+s-1-(q-1)=k+s-q\geqs{k}$ such neighbors. We further assume that $v_{j,1}\neq v_{\ell,1}$ for $1\leqs{j}<\ell\leqs{q}$. Let $A_j^1=\{u_j,v_{j1},...,v_{jk}\}$ for $j\in [1,q]$. Then $|\cup_{j=1}^q A_j^1|\le q(k+1)$. By Claim \ref{Claim:(r-1)complete}, we can find a common neighbor $u\in B_{i_0}$ of $\cup_{j=1}^q A_j^1$ for some $i_0\neq1$, here $i_0$ will be determined later. For each $j\in [1,q]$, since $|A_j^1|=k+1$, by Claim \ref{Claim:(r-1)complete}, we can choose $B_j^i\subset{B_i}$ for $i\in [2, r-1]$ such that
\begin{eqnarray*}
&(a)&|B_j^i|=k,\\
&(b)&B_j^{i_0}\cap{B_\ell^{i_0}}=\{u\}\  \mbox{for $j, \ell\in [1,q]$ and ${j}\not=\ell$},\\
&(c)&B_j^i\cap{B_\ell^i}=\emptyset\ \mbox{ for  $i\geqs2$, $i\neq{i_0}$ and ${j}\not=\ell$},\\
&(d)& \mbox{$G$ contains a complete $(r-1)$-partite graph with partitions $A_j^1, B^2_j,...,B^{r-1}_j$}.
\end{eqnarray*}
Assume $B_j^i=\{u_{j1}^i,...,u_{jk}^i\}$ and $u_{j1}^{i_0}=u$ for every $j\in [1,q]$ and $i\in [2, r-1]$. Then for every $j\in [1,q]$ and every $\ell\in[1,k]$, $G_{j\ell}= G[u_j,v_{j\ell},u^2_{j\ell},...,u^{r-1}_{j\ell}]$ is a copy of $K_r$ and thus $G_j= G_{j1}\cup...\cup G_{jk}$ contains a copy of $\Fkr$ centered at $u_j$. Furthermore, $G_1,...,G_q$ are $q$ edge-disjoint copies of $\Fkr$ with a common vertex $u$. Let $G'$ be the subgraph obtained from $G$ by deleting the edges of $G_1, G_2, \ldots, G_q$ and any other $s-q$ copies of $\Fkr$ in $G$.

(I.1). If $q\geqs\lc{\frac{s}{r-1}}\rc+1$, then
 $$\deg_{G'}(u)\leqs n-|V_{i_0}|-(r-1)q\leqs \lf{\frac{r-2}{r-1}n}\rf-s-1<\lf{\frac{r-2}{r-1}n}\rf-s,$$
 the second inequality holds since $|V_{i_0}|\ge \lc{\frac{n}{r-1}}\rc-(r-2)$ by Claim~\ref{Fact:r-2 balance}. Hence $u$ is the desired vertex and we are done.

(I.2). $q=\lc{\frac{s}{r-1}}\rc$, let $s\equiv{t}\mod(r-1)$, ($t\in [0,r-2]$).

 If $t=0$ or $t\geqs2$, then at least two subsets of $A_1,\ldots, A_{r-1}$ satisfying that $|\{u_1,\ldots, u_s\}\cap A_i|=q$ by the choice of $q$. By Claim~\ref{Fact:r-2 balance}, at least one of $V_i$, $i\in [1,r-1]$ with $|V_i|=\lc{\frac{n}{r-1}}\rc$. Hence we can choose $V_1$ and $i_0\neq1$ such that $|V_1|\le \lc{\aver}\rc$ and $|V_{i_0}|=\lc{\aver}\rc$. So,
$$\deg_{G'}(u)\leqs n-\lc{\aver}\rc-(r-1)q\leqs \lf{\frac{r-2}{r-1}n}\rf-s.$$
Here we need to show that $G'-u$ is not an extremal graph for $\Fkr$. Let $H=G'-u$.
Consider $u_{12}^{i_0}\in{V(H)}$. Then $\deg_H(u_{12}^{i_0})\le n-\lc\aver\rc-(r-1)<\lf\frac{r-2}{r-1}(n-1)\rf$. By Lemma~\ref{PROP: ex-p1}, $H$ is not an extremal graph on $n-1$ vertices for $\Fkr$ and hence we are done.

If $t=1$, then $q=\frac{s-1}{r-1}+1$. Choose $i_0$ such that $|V_{i_0}|=\max\limits_{\ell\in[2,r-1]}|V_\ell|$. Then $|V_{i_0}|\geqs\lf\aver\rf$, otherwise, by Claim~\ref{Fact:r-2 balance}, $|V_1|+...+|V_{r-1}|\leqs\lc\aver\rc+(r-2)(\lf\aver\rf-1)<n.$ Hence, by Claim~\ref{Fact:r-2 balance},
$$\deg_{G'}(u)\leqs n-\lf{\aver}\rf-(r-1)q=\lf{\frac{r-2}{r-1}n}\rf-s-(r-1)<\lf{\frac{r-2}{r-1}n}\rf-s.$$
So we are done in this case.

\textbf{(II).} Suppose that $G$ contains a copy of $F_{qk,r}$ with center $u\in{A_1\cup...\cup A_{r-1}}$ and $q\geqs 2$. We choose $u$ so that $q$ is maximum. If $q\geqs{s+1}$ then we are done since $F_{qk,r}$ contains $s+1$ edge-disjoint copies of $\Fkr$. So $q\leqs{s}$. Then we must have $\deg_G(u)<n-\left(\lc\aver\rc-(r-2)\right)+(q+1)k$, otherwise, $\deg_{G[A_i]}(u)\ge (q+1)k$ for some $i\in [1, r-1]$ with $u\in A_i$, then Claim \ref{Claim:(r-1)complete} guarantees that  there exists a copy of $F_{(q+1)k,r}$  centered at $u$ in $G$, contradicting the choice of $q$. Let $G'$ be the graph obtained from $G$ by deleting the edges of the copy of $F_{qk,r}$ and the edges of any further $s-q$ copies of $\Fkr$. We have
\begin{eqnarray*}
\deg_{G'}(u)&\leqs&n-\left(\lc\aver\rc-(r-2)\right)+(q+1)k-1-(r-1)qk\\
&=&\dtaT+(r-3)-((r-2)q-1)k\\
&\leqs&\dtaT+(r-3)-(2r-5)k \ \ \mbox{(since $q\ge 2$)}\\
&=& \dtaT-\left[\frac{(r-2)(k-1)}2+1\right]-\frac{(r-2)(k-1)}2-(r-3)k\\
&<&\dtaT-s  \ \ \mbox{(since $s<\frac{(r-2)(k-1)}2+1$ by Claim~\ref{Fact:upbound for s})}.
\end{eqnarray*}
Hence $u$ is the desired vertex and we are done.

\vspace{5pt}
\textbf{(III).} Now, suppose that \textbf{(I)} and \textbf{(II)} do not hold. We obtain $G'$ from $G$ by deleting the edges of any $s$ copies of $\Fkr$. If there is a copy centered at $u\in{B_1\cup...\cup B_{r-1}}$, then,  by Claim~\ref{Fact:r-2 balance},
\begin{eqnarray*}
\deg_{G'}(u)&\leqs&n-\left(\lc\aver\rc-(r-2)\right)-(r-1)k\\
&=&\dtaT-(r-1)(k-1)-1\\
&<&\dtaT-s\ \ \mbox{(since $s<\frac{(r-2)(k-1)}2+1$ by Claim~\ref{Fact:upbound for s})}.
\end{eqnarray*}
Hence, all the centers of the $s$ copies of $\Fkr$ lie in $A_1\cup...\cup A_{r-1}$. By \textbf{(II)}, they must be pairwisely distinct. By \textbf{(I)}, at most $s-1$ of them have degree at least $k+s-1$ in $G[A_i]$, $i\in[1, r-1]$. Hence, at least one of the centers, say $u\in{A_i}$ with $\deg_{G[A_i]}(u)\leqs{k+s-2}$. By Claim~\ref{Fact:r-2 balance} and $s<\frac{(r-2)(k-1)}2+1$, we have
\begin{eqnarray*}
\deg_{G'}(u)&\leqs&n-\left(\lc\aver\rc-(r-2)\right)+k+s-2-(r-1)k\\
&=&\dtaT-(r-2)(k-1)+s-2\\
&<&\dtaT-s.
\end{eqnarray*}
Hence $u$ is a desired vertex in $G'$ and we are done.

This completes the proof of Claim~\ref{s+1} and also completes the proof of Theorem~\ref{THM: Main}.

\noindent{\bf Acknowledgement.} We would like to thank Professor Jie Ma for many helpful comments and suggestions.

\end{document}